\documentclass[11pt]{amsart}

\frenchspacing
\usepackage{amsthm}
\usepackage{amsmath}
\setlength{\marginparwidth}{1in}
\usepackage[textsize=tiny]{todonotes}
\usepackage[margin=1in]{geometry}
\usepackage{amssymb,verbatim,enumitem}
\usepackage{tikz}
\usepackage{tikz-cd}
\usepackage{multirow}

\edef\restoreparindent{\parindent=\the\parindent\relax}
\usepackage[skip=2pt]{parskip}
\restoreparindent

\theoremstyle{plain}
\newtheorem{theorem}{Theorem}[section]          
     
\newtheorem{lemma}[theorem]{Lemma}              
\newtheorem{proposition}[theorem]{Proposition}

\newtheorem{question}{Question}[section]

\newtheorem{maintheorem}{Theorem}

\newtheorem{maincorollary}[maintheorem]{Corollary}
\theoremstyle{definition}
\newtheorem{definition}[theorem]{Definition}

\newtheorem{remark}[theorem]{Remark}
\newtheorem{example}[theorem]{Example}

\numberwithin{equation}{section}

\numberwithin{table}{section}


\newcommand{\Z}{\mathbb{Z}}
\newcommand{\R}{\mathbb{R}}\newcommand{\C}{\mathbb{C}}\newcommand{\HH}{\mathbb{H}}

\newcommand{\RR}{\mathbb{R}}


\newcommand{\SO}{\mathsf{SO}}

\newcommand{\Sp}{\mathsf{Sp}}
\newcommand{\Spin}{\mathsf{Spin}}
\newcommand{\SU}{\mathsf{SU}}


\newcommand{\fsp}{\mathfrak{sp}}


\newcommand{\CP}{\mathbb{C}\mathrm{P}}

\newcommand{\Gr}{\mathrm{Gr}}


\newcommand{\cH}{\mathcal{H}}

\newcommand{\cP}{\mathcal{P}}


\DeclareMathOperator{\Ric}{Ric}
\DeclareMathOperator{\Span}{span}
\newcommand{\pr}[1]{\Ric_{#1}>0}

\newcommand{\of}[1]{\left(#1\right)}


\newcommand{\im}{\mathrm{Im}}

\mathcode`l="8000
\begingroup
\makeatletter
\lccode`\~=`\l
\DeclareMathSymbol{\lsb@l}{\mathalpha}{letters}{`l}
\lowercase{\gdef~{\ifnum\the\mathgroup=\m@ne \ell \else \lsb@l \fi}}%
\endgroup

\renewcommand{\Im}{\mathrm{Im}}

\usepackage{hyperref}

\newcommand*{\defeq}{\mathrel{\vcenter{\baselineskip0.5ex \lineskiplimit0pt
			\hbox{\scriptsize.}\hbox{\scriptsize.}}}%
	=}

\title[Positive intermediate Ricci curvature on cohomogeneity one manifolds]{Positive intermediate Ricci curvature on cohomogeneity one manifolds in low dimensions}
\author{Elahe Khalili Samani}
\address{Clark University, Worcester, MA USA}
\email{EKhaliliSamani@clarku.edu}
\author{Lawrence Mouill\'e}
\address{Trinity University, San Antonio, TX USA}
\email{lmouille@trinity.edu}
\date{\today}
\subjclass[2020]{53C20, 57S15}

\begin{document}

\begin{abstract}
	We explore existence of invariant metrics with positive intermediate Ricci curvature on closed, low-dimensional cohomogeneity one manifolds. 
	For a certain cohomogeneity one $\Spin(4)$-action on $S^3 \times \CP^2$, we construct an invariant metric with positive $4^\mathrm{th}$-intermediate Ricci curvature 
	and show it cannot admit an invariant metric with positive $3^\mathrm{rd}$-intermediate Ricci curvature. 
	We further establish similar symmetry obstructions to positive curvature for $S^3 \times S^3$, $S^3 \times S^4$, and several families of cohomogeneity one manifolds.
\end{abstract}

\maketitle

\section{Introduction}

The study of Riemannian manifolds with positive sectional curvature has been a central problem in Riemannian geometry since the creation of the subject.
Although several topological obstructions to positive sectional curvature are known, the list of known examples remains very limited, with the rank one symmetric spaces remaining the only known simply connected examples in dimensions larger than $24$. An area of focus receiving increased attention lately has been the investigation of {\it positive $k^{th}$-intermediate Ricci curvature} 
($\pr{k}$), a condition that interpolates between positive sectional curvature ($k=1$) and positive Ricci curvature ($k=n-1$) on an $n$-manifold (see Definition \ref{def:Rick}). 
One motivating goal in this area is to extend results known for positive sectional curvature, like the Gromoll-Meyer Theorem extended by Shen \cite{Shen93}, the Synge Theorem by Wilhelm \cite{Wilhelm97}, the Grove-Searle Theorem by Kennard and the second author \cite{Mouille22b,KennardMouille24}, and a series of results by Guijarro and Wilhelm \cite{GuijarroWilhelm18,GuijarroWilhelm20,GuijarroWilhelm22}.
Another key goal is to construct new examples that do not admit positive sectional curvature, like the homogeneous examples given by Dom\'inguez-V\'azquez, Gonz\'alez-\'Alvaro, DeVito, Rodr\'iguez-V\'azquez, and the second author \cite{DVGAM23,DMDApreprint} and several constructions by Reiser and Wraith \cite{ReiserWraith22preprint2,ReiserWraith23,ReiserWraith23preprint2,ReiserWraith23preprint1}.


Focusing on closed, simply connected cohomogeneity one manifolds, a classification of those with positive sectional curvature was obtained, except in dimension seven 
in which there are two families of possible candidates, by Verdiani, Grove, Wilking, and Ziller \cite{Verdiani02,Verdiani04,GroveWilkingZiller08,VerdianiZiller14}. (For an extension of this classification to quasipositive curvature, see \cite{Wulle24}.) On the other hand, Grove and Ziller showed that any cohomogeneity one manifold with finite fundamental group admits an invariant metric 
with positive Ricci curvature \cite{GroveZiller02}. 
This naturally leads to the question: What is the smallest $k$ for which a given cohomogeneity one manifold admits an invariant metric with $\pr{k}$? 

In this paper, we address this question in low dimensions. Our first result is the following:

\begin{maintheorem}\label{main-thm:S3CP2}
	$S^3\times\C\mathrm{P}^2$ admits a metric with $\Ric_4 > 0$ that is invariant under the cohomogeneity one 
	$(S^3 \times S^3)$-action with the group diagram
	\[
		H=\langle(i,i)\rangle \subset \{(e^{i\theta},e^{i\theta})\}, \{(e^{j\theta},e^{j\theta})\}\cdot H\subset S^3\times S^3.
	\]
	Furthermore, it does not admit a metric with $\pr{3}$ invariant under this action.    
\end{maintheorem}

For a description of this action, see Equation \eqref{eq:s3xcp2action} below.
The metric we construct is a Cheeger deformation by the action of an $S^3$ subgroup; for details see the exposition preceding Proposition \ref{P:s3xcp2}.
The Hopf action on the  first factor of $S^3\times\C\mathrm{P}^2$ is by isometries of this metric, and thus the Gray-O'Neill formula (Lemma \ref{lem:oneill}) applied to the quotient map $S^3\times\C\mathrm{P}^2 \to S^2 \times \CP^2$ gives the following:

\begin{maincorollary}\label{cor:s2xcp2}
	$S^2 \times \CP^2$ admits a metric with $\pr{4}$ that is invariant under the diagonal action of $\SO(3)$, which is of cohomogeneity three.
\end{maincorollary}



Since our metrics in Theorem \ref{main-thm:S3CP2} and Corollary \ref{cor:s2xcp2} are of cohomogeneity one and three, respectively, it is natural to ask the following:

\begin{question}
	Does $S^3 \times \CP^2$ or $S^2 \times \CP^2$ admit a homogeneous metric with $\pr{4}$?
\end{question}

In \cite{VerdianiZiller14}, Verdiani and Ziller applied an argument involving transverse families of Jacobi fields to rule out existence of invariant metrics with $\pr{2}$ on a family of $7$-dimensional cohomogeneity one manifolds (see Example \ref{ex:PD7}). 
We follow similar techniques to prove the last statement in Theorem \ref*{main-thm:S3CP2}.
Taking this method further, using Hoelscher's classification of low-dimensional cohomogeneity one manifolds in \cite{Hoelscher10}, we establish our second result:

\begin{maintheorem}\label{main-thm:obstruction}
The following cohomogeneity one manifolds do not admit invariant metrics with $\pr{2}$:
\begin{enumerate}
\item  The Brieskorn varieties $M_d^5$ with the group diagram
		\[
		\{(1,1)\}\subset \{(e^{i\theta}, 1)\}, \{(e^{jd\theta}, e^{i\theta})\}\subset S^3\times S^1,
		\]  
		where $d$ is even.
\item The family of cohomogeneity one manifolds $Q_A^5$ with the group diagram
		\[
		\{(1,1)\}\subset\{(e^{ip\theta},e^{i\theta})\}, \{(e^{ip\theta},e^{i\theta})\}\subset S^3\times S^1.
		\]
\item The family of cohomogeneity one manifolds $N_C^7$ with the group diagram 
		\[
		H\subset\{(e^{ip\theta},e^{iq\theta})\}, S^3\times\Z_n\subset S^3\times S^3,
		\]		
        where $(q,n)=1$, $\Z_n\cong H\subset\{(e^{ip\theta},e^{iq\theta})\}$, assuming $n = 1$ or $2$.
\end{enumerate}
\vspace{10pt}
\noindent
Moreover, the following cohomogeneity one manifolds do not admit invariant metrics with $\pr{3}$:
\begin{enumerate}[resume]
\item The cohomogeneity one manifold $S^3\times S^3$ with the group diagram
		\[
		\{e^{i\theta}\}\times 1\subset S^3\times 1, S^3\times 1\subset S^3\times S^3.
		\]
\item The family of cohomogeneity one manifolds $P_A^7$ with the group diagram
		\[
		H=\langle(i,i)\rangle\subset\{(e^{ip_-\theta},e^{iq_-\theta})\}, \{(e^{jp_+\theta},e^{jq_+\theta})\}\cdot H\subset S^3\times S^3,
		\]
where $p_-, q_-\equiv 1\mod 4$. 
\item The family of cohomogeneity one manifolds $Q_C^7$ with the group diagram
		\[
		S^1\times 1\times 1\subset\{(e^{i\phi},e^{ib\theta},e^{i\theta})\}, S^3\times 1\times 1\subset S^3\times S^3\times S^1.
		\]
\item The cohomogeneity one manifold $S^4 \times S^3$ with the group diagram
		\[
		\{(1,1)\}\subset S^3\times 1, S^3\times 1\subset S^3\times S^3.
		\]		
\end{enumerate}
\end{maintheorem}

We now discuss the current state of the art for low-dimensional cohomogeneity one metrics with positive intermediate Ricci curvature.
In dimensions seven and six, the only obstructions are those in Theorem \ref{main-thm:S3CP2}, Theorem \ref{main-thm:obstruction}, \cite[Theorem D]{VerdianiZiller14}, and those concerning positive sectional curvature (see \cite{Verdiani02,Verdiani04,GroveWilkingZiller08,VerdianiZiller14}).
While the metrics constructed in Theorem \ref{main-thm:S3CP2} and Corollary \ref{cor:s2xcp2} add to the list of known examples, there is still much left to discover in these dimensions.

In dimension five, Hoelscher showed the only closed, simply connected, cohomogeneity one manifolds are diffeomorphic to the sphere $S^5$, the Wu manifold $\SU(3)/\SO(3)$, or either of the two $S^3$-bundles over $S^2$ \cite[Theorem C]{Hoelscher10}.
The standard metric on $S^5$ has positive sectional curvature, any normal homogeneous metric on $\SU(3)/\SO(3)$ has $\pr{3}$ (observed independently by Dom\'inguez-V\'azquez, Gonz\'alez-\'Alvaro, and the second author in \cite{DVGAM23} and Amann, Quast, and Zarei in \cite{AmannQuastZarei20preprint}), and the trivial bundle $S^3 \times S^2$ admits a homogeneous metric with $\pr{2}$ (see Example \ref{ex:s3xs3}).
However, the following are still open:
\begin{question}
	Does $\SU(3)/\SO(3)$ admit a metric with $\pr{2}$?
\end{question}
\begin{question}
	Does the non-trivial $S^3$-bundle over $S^2$ admit a metric with $\pr{3}$?
\end{question}

%

In dimension four, Parker showed the only closed, simply connected, cohomogeneity one manifolds are diffeomorphic to the sphere $S^4$, the complex projective space $\CP^2$, the product of spheres $S^2 \times S^2$, or the connected sum of complex projective spaces with opposite orientations $\CP^2 \# \overline{\CP^2}$ \cite{Parker86} (see also \cite{Hoelscher07thesis}).
While the first two admit metrics with positive sectional curvature and $S^2 \times S^2$ admits a cohomogeneity one metric with $\pr{2}$ (see Example \ref{ex:s3xs3}), the following is still open:

\begin{question}
	Does $\CP^2 \# \overline{\CP^2}$ admit a metric with $\pr{2}$?
\end{question}



Finally, in dimensions three and two, the only closed, simply connected, cohomogeneity one manifolds are the spheres $S^3$ and $S^2$ (see \cite{Hoelscher07thesis}), which of course admit metrics with positive sectional curvature.

This paper is organized as follows: 
In Section \ref{S:preliminaries}, we collect tools and observations used throughout the paper. 
Section \ref{S:constructions} is devoted to the construction of the metric with $\pr{4}$ on $S^3 \times \CP^2$ described in Theorem \ref{main-thm:S3CP2}.
The proof of Theorem \ref{main-thm:obstruction},
as well as that of the second claim in Theorem \ref{main-thm:S3CP2}, is given in Section \ref{S:obstructions}.
Specifically, the proof of Theorem \ref{main-thm:S3CP2} is given in Propositions \ref{P:s3xcp2} and \ref{P:P_A^7}.
Throughout the paper, we follow the notational conventions established by Hoelscher in \cite{Hoelscher10}.

\subsection*{Acknowledgments}
	This work was initiated when the second author visited Clark University; he thanks the Department of Mathematics for their hospitality.
	The authors would like to thank Lee Kennard and Jason DeVito for their comments on a preliminary version of this article.
	They also thank an anonymous referee for helpful suggestions.
	The first author was supported by the AMS-Simons Travel Grant, while the second author was supported by NSF Award DMS-2202826 and the AMS-Simons Research Enhancement Grant for PUI Faculty.

\section{Preliminaries}\label{S:preliminaries}

\subsection{Intermediate Ricci curvature}\label{SS:Ric_k}
	
	Let $(M,g)$ be a Riemannian manifold with associated Levi-Civita connection $\nabla$ and Riemannian curvature tensor $R$. 
	Given $p\in M$ and a non-zero vector $x\in T_pM$, the directional curvature operator $R_x:\Span\{x\}^\perp\to\Span\{x\}^\perp$ (also known as the Jacobi operator 
	or the tidal force operator) is the self-adjoint operator given by
	\[
		R_x(y) = R(y,x)x.
	\]

	\begin{definition}\label{def:Rick}
		We say that $(M,g)$ has {\it{positive $k^{th}$-intermediate Ricci curvature}} ($\Ric_k>0$) if either of the following equivalent conditions holds:
		\begin{enumerate}
			\item For every $p\in M$ and every non-zero $x\in T_pM$, the directional curvature operator $R_x$ is $k$-positive, that is, the sum of any $k$ eigenvalues 
			of $R_x$ is positive.
			\item For $p\in M$ and orthonormal vectors $x,y_1,\dots,y_k\in T_pM$, one has $\displaystyle\sum_{i=1}^k \sec(x,y_i) > 0$.
		\end{enumerate}
	\end{definition}
	
	For a list of works related to intermediate Ricci curvature, see \cite{Mouillewebsite}.
	Riemannian product provide a natural method for constructing new examples.
	An elementary calculation justifies the following:
	
	\begin{lemma}\label{lem:product}
		If $(M_1,g_1)$ has $\pr{k_1}$ and $(M_2,g_2)$ has $\pr{k_2}$, then the Riemannian product $(M_1 \times M_2,g_1 \oplus g_2)$ has $\pr{k}$, 
		where $k = \max\{k_1 + \dim M_2, k_2 + \dim M_1\}$.
	\end{lemma}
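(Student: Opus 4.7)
The plan is to reduce the statement to a spectral analysis of the Jacobi operator. Fix a point $(p_1, p_2)\in M_1\times M_2$ and a unit tangent vector $x=(x_1,x_2)$, and write $n_i = \dim M_i$. Because the curvature tensor of a Riemannian product splits, $R_x$, extended to the full tangent space by $R_x(x)=0$, decomposes as the orthogonal block sum $R_{x_1}\oplus R_{x_2}$, where $R_{x_i}$ is the analogously extended Jacobi operator on $T_{p_i}M_i$. Hence the spectrum of $R_x$ on $T_{(p_1,p_2)}(M_1\times M_2)$ is the multiset union of the spectra of $R_{x_1}$ and $R_{x_2}$, and the spectrum on $\{x\}^\perp$ is obtained by removing one $0$ (since $x\in \ker R_x$). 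By the second formulation in Definition \ref{def:Rick}, establishing $\pr{k}$ at $x$ amounts to showing that the sum of the smallest $k$ of these eigenvalues is strictly positive.

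One preliminary fact I would record is that $\pr{k'}$ on any manifold implies $\pr{j}$ for all $k' \le j \le n-1$. This follows from a short averaging argument: given any $j+1$ eigenvalues of a Jacobi operator, each of the $j+1$ size-$j$ subsums is positive (inductively), and summing these subsums gives $j$ times the total of the $j+1$ eigenvalues, which is therefore also positive.

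For the main argument, assume without loss of generality $k = k_1 + n_2 \ge k_2 + n_1$, equivalently $n_2 - k_2 \ge n_1 - k_1$. When $x_1, x_2$ are both nonzero, let $\mu_j^{(i)}$ for $j = 1, \dots, n_i - 1$ denote the eigenvalues of $R_{x_i/|x_i|}$ on $\{x_i\}^\perp$; the spectrum of $R_x$ on $\{x\}^\perp$ is then $\{0\}\cup\{|x_1|^2 \mu_j^{(1)}\}_j \cup \{|x_2|^2 \mu_j^{(2)}\}_j$. Writing $\sigma_a^{(i)}$ for the sum of the $a$ smallest $\mu_j^{(i)}$, the sum of the smallest $k$ eigenvalues of $R_x$ on $\{x\}^\perp$ equals the minimum, over integer partitions $a + b + c = k$ with $0 \le a \le n_1 - 1$, $0 \le b \le n_2 - 1$, $c\in\{0,1\}$, of $|x_1|^2\sigma_a^{(1)} + |x_2|^2\sigma_b^{(2)}$. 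The constraints force $a \ge k - (n_2 - 1) - 1 = k_1$ and $b \ge k - (n_1 - 1) - 1 = k_1 + n_2 - n_1 \ge k_2$, so by the preliminary fact both $\sigma_a^{(1)}$ and $\sigma_b^{(2)}$ are strictly positive; since $|x_1|^2, |x_2|^2 > 0$, the full weighted sum is positive. The degenerate cases $x_1 = 0$ or $x_2 = 0$ reduce to single-factor analyses (the vanishing factor contributes only $0$ eigenvalues to the spectrum), and the same arithmetic $k - n_j \ge k_i$ on the surviving factor supplies the bound.

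The main (modest) obstacle is bookkeeping: keeping the three cases and the three-variable partition straight, and noticing that the asymmetric formula $k = \max\{k_1+n_2,\, k_2+n_1\}$ is precisely what is needed to force both $a \ge k_1$ and $b \ge k_2$ to hold simultaneously in the generic case. Everything else is elementary linear algebra together with Definition \ref{def:Rick}.
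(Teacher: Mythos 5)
Your proof is correct and is precisely the elementary spectral calculation the paper alludes to but does not write out: the Jacobi operator of a product splits as a block sum, you invoke the standard monotonicity $\pr{k'} \Rightarrow \pr{j}$ for $j \ge k'$, and the constraint arithmetic on the partition $a+b+c=k$ shows exactly why $k = \max\{k_1+n_2, k_2+n_1\}$ is the right (and sharp) value. No gaps.
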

	
	The next result is a simple consequence of the Gray-O'Neill curvature formulas for Riemannian submersions \cite{Gray67,ONeill66}:
	
	\begin{lemma}\label{lem:oneill}
		Let $\pi: (M,g_M) \to (B,g_B)$ be a Riemannian submersion.
		Denote by $\cH$ the horizontal distribution for $\pi$, meaning $\cH_{p}$ is the orthogonal complement of $\ker d\pi_p$ with respect to $g_M$ for each $p \in M$.
		If $\sum_{i=1}^k \sec(x,y_i) > 0$ for all orthonormal vectors $x,y_1,\dots,y_k$ contained in $\cH$, then $(B,g_B)$ has $\Ric_k > 0$.
	\end{lemma}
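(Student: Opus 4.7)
The plan is to prove this by pulling back orthonormal $k$-tuples from the base to horizontal $k$-tuples in the total space and then applying O'Neill's horizontal curvature formula term by term.

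First I would fix a point $q \in B$ and an orthonormal collection $\bar{x},\bar{y}_1,\dots,\bar{y}_k \in T_qB$. Choosing any $p \in \pi^{-1}(q)$, the restriction $d\pi_p \colon \cH_p \to T_qB$ is a linear isometry by definition of a Riemannian submersion, so there is a unique horizontal lift $x,y_1,\dots,y_k \in \cH_p$ of the chosen frame. Orthonormality is preserved, so the hypothesis of the lemma applies to these lifted vectors.

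Next I would invoke O'Neill's formula for the sectional curvature of a horizontal plane. For orthonormal horizontal vectors $X,Y$, it reads
\[
\sec_B\bigl(d\pi(X),d\pi(Y)\bigr) = \sec_M(X,Y) + 3\,|A_X Y|^2,
\]
where $A$ is the O'Neill integrability tensor. In particular, $\sec_B(\bar{x},\bar{y}_i) \geq \sec_M(x,y_i)$ for each $i$. Summing over $i = 1,\dots,k$ gives
\[
\sum_{i=1}^k \sec_B(\bar{x},\bar{y}_i) \;\geq\; \sum_{i=1}^k \sec_M(x,y_i) \;>\; 0,
\]
where the last inequality is precisely the hypothesis applied to the horizontal orthonormal frame $x,y_1,\dots,y_k$. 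Since $q$ and the orthonormal tuple were arbitrary, this shows $(B,g_B)$ has $\Ric_k > 0$ via the characterization in Definition \ref{def:Rick}(2).

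There is no real obstacle here: the only subtlety is making sure the horizontal lift produces an orthonormal frame (immediate from the Riemannian submersion property) and that O'Neill's $A$-tensor contribution has the correct sign, which it does. I would simply cite \cite{Gray67,ONeill66} for the formula rather than reproducing its derivation.
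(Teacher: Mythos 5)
Your proof is correct and matches the paper's intent exactly: the paper states Lemma~\ref{lem:oneill} as ``a simple consequence of the Gray-O'Neill curvature formulas'' without spelling out the details, and your argument — horizontally lifting an orthonormal frame, applying $\sec_B(d\pi X, d\pi Y) = \sec_M(X,Y) + 3|A_XY|^2 \geq \sec_M(X,Y)$ termwise, and summing — is precisely the standard derivation being invoked.
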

	
	We now illustrate Lemmas \ref*{lem:product} and \ref*{lem:oneill} with the following:
	
	\begin{example}\label{ex:s3xs3}
		By Lemma \ref*{lem:product}, the product metric on $S^3 \times S^3$ has $\pr{4}$.
		However, $S^3 \times S^3$ admits a metric with $\Ric_2 > 0$ that is invariant under the action of $S^3 \times S^3 \times S^3$ given by
		\[
			(a,b,c)\cdot(p,q) = (apb^{-1},aqc^{-1}).
		\]
		This metric is a Cheeger deformation of the product action via the action of the subgroup $S^3 \times \{1\} \times \{1\}$.
		(See Section \ref{SS:Cheeger} for the definition of Cheeger deformation and Example \ref{ex:s3xs3revisited} for why this metric has $\pr{2}$.)
		Letting $S^1$ denote the complex circle $\{e^{i \theta}\}$ in $S^3$, the quotient of $S^3\times S^3$ by the free action of the subgroup $\{1\} \times \{1\} \times S^1$ 
		is diffeomorphic to $S^3 \times S^2$, and it follows from Lemma \ref*{lem:oneill} that this space inherits a metric with $\Ric_2 > 0$. 
		This metric is invariant under the induced transitive action by $S^3 \times S^3 \times \{1\}$, and hence it is invariant under the cohomogeneity one action by $S^3 \times S^1 \times \{1\}$. 
		On the other hand, the quotient of $S^3\times S^3$ by the subgroup 
		$\{1\} \times S^1 \times S^1$ provides a metric with $\pr{2}$ on $S^2 \times S^2$. This metric is invariant under the induced cohomogeneity one action 
		by $S^3 \times \{1\} \times \{1\}$, for which the associated effective action is the diagonal action of $\SO(3)$ on $S^2 \times S^2$.
	\end{example}

\subsection{Cohomogeneity one manifolds}\label{SS:cohomogeneity one}

	Suppose $M$ is a closed Riemannian manifold with a cohomogeneity one action by a Lie group $G$ such that $M/G=[0,L]$. It is well-known that any such cohomogeneity one   
	manifold has a group diagram $H\subset K_-, K_+ \subset G$ obtained as follows. Let $\pi:M\to M/G$ denote the quotient map and fix $p\in\pi^{-1}(t_0)$ for some $t_0\in(0,L)$. 
	Fixing a $G$-invariant metric $g$ on $M$ such that $M/G$ is isometric to $[0,L]$ and letting $\gamma:[0,L]\to M$ be the unique minimal horizontal geodesic 
	with $\gamma(t_0)=p$, the groups $H$, $K_-$, and $K_+$ are given by $G_{\gamma(t_0)}$, $G_{\gamma(0)}$, and $G_{\gamma(L)}$, respectively. 
	Moreover, changing the choice of the $G$-invariant metric $g$ or horizontal geodesic gives rise to a different group diagram, and all possible group diagrams are described in the following 
	(see \cite[p. 44]{GroveWilkingZiller08}):
	
	\begin{lemma}\label{L:diagram}
		Let $M$ be a cohomogeneity one manifold with the group diagram $H \subset  K_-,K_+  \subset G$. A cohomogeneity one $G$-manifold $N$ is equivariantly diffeomorphic 
		to $M$ if and only if its group diagram is of the form (up to switching $K_-$ and $K_+$):
		\[
			aHa^{-1}\subset anK_-n^{-1}a^{-1}, aK_+a^{-1} \subset G,
		\]
		where $a\in G$ and $n\in N(H)_0$, the identity component of the normalizer of $H$ in $G$.
	\end{lemma}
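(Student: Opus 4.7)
The plan is to prove both directions of the equivalence. For the ``if'' direction, I construct explicit equivariant diffeomorphisms realizing each allowable modification of the diagram. For the ``only if'' direction, I fix a $G$-equivariant diffeomorphism $\Phi : M \to N$ and track how the data used to read off a diagram (a choice of $G$-invariant metric, base point, and horizontal geodesic) can change.

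For the constructive direction, I use the reconstruction of a cohomogeneity one manifold from its group diagram as the double disk bundle
\[
    M \;\cong\; (G \times_{K_-} D_-) \;\cup_{G/H}\; (G \times_{K_+} D_+),
\]
glued along canonical identifications of the boundary spheres with $G/H$. Conjugation of the diagram by $a \in G$ is realized by the left-translation $L_a : M \to M$, which is $G$-equivariant and moves the base point $p$ to $a \cdot p$; this step alone yields the diagram $aHa^{-1} \subset \{aK_- a^{-1}, aK_+ a^{-1}\}$. To further twist by $n \in N(H)_0$, I use that right multiplication $R_n : G/H \to G/H$ is a well-defined $G$-equivariant self-diffeomorphism (since $n$ normalizes $H$), and a smooth path from $e$ to $n$ in $N(H)_0$ provides an isotopy from the identity to $R_n$ through such self-diffeomorphisms. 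This isotopy extends across a collar neighborhood of $G/H$ inside the $K_-$-side disk bundle, producing a $G$-equivariant diffeomorphism from $M$ to the manifold with diagram $H \subset \{nK_- n^{-1}, K_+\}$. Composing these two modifications yields any diagram of the form stated in the lemma.

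For the converse, suppose $N$ is a cohomogeneity one $G$-manifold equivariantly diffeomorphic to $M$ via $\Phi : M \to N$, with diagram $H' \subset \{K_-', K_+'\}$ arising from some data $(g_N, p_N, \gamma_N)$. The preimage $\Phi^{-1}(p_N) \in M$ is a regular point, and by transitivity of $G$ on the principal orbit through $p$, one can write $\Phi^{-1}(p_N) = a \cdot p$ for some $a \in G$; equivariance of $\Phi$ then forces $H' = a H a^{-1}$. Next, I compare the two horizontal geodesics through $a \cdot p$: namely the $a$-translate of the original $\gamma$ (horizontal for $g$) and the pullback $\Phi^{-1} \circ \gamma_N$ (horizontal for $\Phi^* g_N$). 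Since the normal line to the principal orbit at $a \cdot p$ is one-dimensional and $H'$ acts trivially on it, these geodesics agree up to sign of initial velocity and choice of metric, and the remaining freedom in aligning their endpoints on the singular orbits is captured by the action of $N(H')_0 / H'$ on the principal orbit. After conjugation by $a^{-1}$ this corresponds precisely to a factor $n \in N(H)_0$, while reversing the orientation of $[0,L]$ accounts for the ``up to switching $K_-$ and $K_+$'' clause.

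The main obstacle will be justifying the asymmetry in the formula: only $K_-$ is conjugated by the extra factor $n$, while $K_+$ is conjugated by $a$ alone. The resolution is that the $n$-twist can be absorbed into an equivariant self-diffeomorphism of the $K_-$-side disk bundle (by extending the isotopy from the identity to $R_n$ across a collar of $G/H$), leaving the $K_+$-side gluing untouched. The restriction $n \in N(H)_0$, rather than all of $N(H)$, is essential at this step: only identity-component elements admit the requisite isotopy through $G$-equivariant self-diffeomorphisms of $G/H$, and elements in other components of $N(H)$ generally yield non-isotopic gluings and hence potentially distinct cohomogeneity one manifolds, unless compensated by the allowed swap of $K_-$ and $K_+$.
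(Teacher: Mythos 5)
The paper does not prove this lemma; it is cited verbatim from \cite[p.\ 44]{GroveWilkingZiller08}, so there is no in-text argument to compare your proof against. Your overall strategy (double disk bundle reconstruction plus $G$-equivariant isotopy across a collar of $G/H$) is the standard one in the literature, and your ``if'' direction, apart from a small slip, is essentially right. The slip: the map $L_a : m \mapsto a\cdot m$ is \emph{not} $G$-equivariant unless $a$ is central, since $L_a(g\cdot m) = (aga^{-1})\cdot L_a(m)$. What is actually going on is more elementary: the manifold $M$, its metric, and its geodesic are unchanged; you merely read the diagram off at the base point $a\cdot p$ instead of $p$, which replaces every isotropy group by its conjugate by $a$. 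No new diffeomorphism is being built at that step.

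The genuine gap is in the ``only if'' direction. You compare two horizontal geodesics through $a\cdot p$ — one for $g$, one for $\Phi^* g_N$ — and assert that ``these geodesics agree up to sign of initial velocity and choice of metric.'' That is not justified: the normal line to the principal orbit depends on the invariant metric, and if the isotropy representation of $H$ on $\mathfrak g/\mathfrak h$ contains trivial subrepresentations (which happens precisely when $N(H)_0 \neq H_0$, i.e.\ in exactly the situation where the $n$-twist is a nontrivial degree of freedom), the $H$-invariant complement to $T_{a\cdot p}(G\cdot(a\cdot p))$ is not unique, and the two horizontal lines can differ. The subsequent claim that ``the remaining freedom \dots is captured by the action of $N(H')_0/H'$'' is the conclusion of the lemma, but you have asserted it rather than derived it. The standard way to close this gap is to forget the geodesics and analyze $\Phi$ directly on the regular part: the regular part of each manifold is $G$-equivariantly a product $(0,L)\times G/H$, and any $G$-equivariant diffeomorphism between two such products necessarily has the form $(t,gH)\mapsto(\phi(t),\,g\,\psi(t)H)$ for a diffeomorphism $\phi$ of intervals and a smooth path $\psi : (0,L)\to N(H)/H$. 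Extending to the closed interval and comparing limits at the two ends identifies the singular isotropy groups up to conjugation by $\psi(0)$ and $\psi(L)$ respectively; connectedness of $(0,L)$ then forces $\psi(L)^{-1}\psi(0)$ to lie in the identity component of $N(H)/H$, which (after lifting and absorbing $\psi(L)$ into $a$) is precisely the element $n\in N(H)_0$ twisting only $K_-$. Also, note that $\Phi^{-1}(p_N)$ is a regular point of $M$ but not necessarily in the principal orbit through $p$ itself; you want $\Phi^{-1}(p_N)=a\cdot\gamma(s)$ for some regular $s$, which changes nothing since all regular points on $\gamma$ have isotropy $H$, but it should be said.
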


	Throughout the paper, a cohomogeneity one manifold with the group diagram 
	\begin{equation}\label{eq:equivalent}
	H\subset K_-,K_+  \subset G
	\end{equation} 
	refers to any cohomogeneity one manifold whose group diagram is equivalent to \eqref{eq:equivalent} in the sense of Lemma \ref{L:diagram}.
	
	\begin{remark}\label{R:Weyl}
		The geodesic $\gamma$ discussed above can be extended to a geodesic $\gamma:\R\to M$, and $\gamma(\R)$ forms a section for the action of $G$ on $M$. 
		In particular, the action of $G$ on $M$ is polar with the Weyl group $W$ generated by unique involutions $w_\pm \in (N(H) \cap K_\pm)/H$.
		Furthermore, the Weyl group $W$ is finite if and only if the geodesic $\gamma$ is closed, in which case, the length of $\gamma$ is $kL$, where $k$ is the order of $W$.
		Note that the singular isotropy groups along $\gamma$ 
		are of the form $wK_{\pm}w^{-1}$ for some $w\in N(H)$ representing an element of $W$. More precisely, conjugation by a representative of $w_-$ takes $G_{\gamma(kL)}$ 
		to $G_{\gamma(-kL)}$, and conjugation by a representative of $w_+$ takes $G_{\gamma(kL)}$ to $G_{\gamma(2L-kL)}$ (see \cite[p. 368]{VerdianiZiller14}). 
		Thus given representatives $u_\pm$ respectively for $w_\pm$, we have $G_{\gamma(2L+kL)} = (u_+ u_-) G_{\gamma(kL)} (u_+ u_-)^{-1}$.
	\end{remark}

\subsection{Curvature restrictions on Jacobi fields}\label{SS:Jacobi}
	
	Here, we outline how positive intermediate Ricci curvature controls the behavior of action fields on a cohomogeneity one manifold.
	
	Suppose $(M^n,g)$ is a compact, cohomogeneity one $G$-manifold with $M/G=[0,L]$. 
	Let $\gamma:[0,L] \to M$ be a unit speed horizontal geodesic 
	that begins at a singular orbit $N = G(\gamma(0))$. 
	Consider the $(n-1)$-dimensional vector space $\Lambda$ of Jacobi fields along $\gamma$ generated by the $G$-action. 
	For all $t\in(0,L)$, we have $\Span\{\gamma'(t)\}^\perp = \{J(t):J\in \Lambda\}$, and the Riccati operator $S_t:\Span\{\gamma'(t)\}^\perp \to \Span\{\gamma'(t)\}^\perp$ 
	is defined by $S_t(J(t)) = J'(t)$. 
	For $t=0$ or $L$, we have $\Span\{\gamma'(t)\}^\perp = \{J(t):J\in \Lambda\} \oplus \{J'(t) : J \in \Lambda, J(t)=0\}$ 
	(see \cite[Section 1.7]{GromollWalschap09}). 
	Moreover, by Lemma 1.3 in \cite{GuijarroWilhelm22}, the equation $S_t(J(t)) = J'(t)$ provides a well-defined Riccati operator 
	for these values of $t$ as well. 
	Since $\Lambda$ consists of variational fields for the variation of $\gamma$ by geodesics that leave $N$ orthogonally at $t=0$, 
	it follows that the Riccati operator $S_t$ is self-adjoint for all $t$ (see Lemma 1.4 in \cite{GuijarroWilhelm22}). 
	$\Lambda$ is an example of a {\it Lagrangian} family of normal Jacobi fields.
	In other words, with respect to the symplectic form $\omega(J_1,J_2) = g(J_1',J_2) - g(J_1,J_2')$ defined on the $(2n-2)$-dimensional space of all Jacobi fields orthogonal to $\gamma$, $\Lambda$ is equal to its own symplectic complement.
	
	Using Wilking's Transverse Jacobi Equation in \cite[Theorem 9]{Wilking07}, Gumaer and Wilhelm \cite{GumaerWilhelm14} studied consequences of positive intermediate Ricci  
	curvature on Lagrangian families of normal Jacobi fields (see Verdiani and Ziller \cite{VerdianiZiller14} for related independent results).
	A direct consequence of applying \cite[Theorem C]{GumaerWilhelm14} (cf. \cite[Theorem B]{VerdianiZiller14}) to the Lagrangian family $\Lambda$ defined above is the following, 
	which is the key obstruction we use in this paper:
	
	\begin{lemma}\label{L:obstruction}
		Suppose $(M^n,g)$ is a compact, cohomogeneity one $G$-manifold. Let $\gamma$ be a unit speed horizontal geodesic, and let $\Lambda$ be the family of Jacobi fields 
		along $\gamma$ generated by the $G$-action. If\vspace{-0.1cm}
		\[
			\dim(\Span\{J\in\Lambda\mid J(t)=0~{\text{for some}}~t\}) = n-k-1,
		\]
		then the metric $g$ does not have $\Ric_k>0$.
		In other words, there is an orthonormal set of vectors $\{x, y_1, \dots ,y_k\}$ for which $\sec(x,y_1) + \dots + \sec(x,y_k) \leq 0$.
	\end{lemma}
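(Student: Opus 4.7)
The plan is to derive Lemma \ref{L:obstruction} as a direct application of Theorem C in \cite{GumaerWilhelm14} (equivalently Theorem B in \cite{VerdianiZiller14}) to the family $\Lambda$, so the proof amounts to checking hypotheses and taking a contrapositive. That cited theorem asserts that along a geodesic in an $n$-manifold with $\pr{k}$, any Lagrangian family of normal Jacobi fields must contain a subspace of fields vanishing at some point of dimension at least $n - k - 1$.

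First, I would verify that $\Lambda$ qualifies as a Lagrangian family of normal Jacobi fields along $\gamma$. Most of this is already established in the paragraph preceding the statement: $\Lambda$ is $(n-1)$-dimensional, its fields are orthogonal to $\gamma'$, and its Riccati operator $S_t$ is well-defined at every $t$ (with the extension at the singular endpoints supplied by Lemma 1.3 of \cite{GuijarroWilhelm22}) and self-adjoint, since $\Lambda$ consists of variational fields for variations of $\gamma$ by geodesics leaving the singular orbit $N$ orthogonally. Self-adjointness of $S_t$ is equivalent to $\Lambda$ being equal to its own symplectic complement with respect to $\omega$, which is precisely the Lagrangian condition.

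Second, I would invoke the cited theorem and take its contrapositive: under the hypothesis that the vanishing subspace has dimension at most $d$, the condition $\pr{k}$ must fail for every $k \leq n - d - 1$, giving the first conclusion of the lemma. The equivalent reformulation asserting the existence of an orthonormal set $\{x, y_1, \dots, y_k\}$ with $\sec(x, y_1) + \cdots + \sec(x, y_k) \leq 0$ is then immediate from Definition \ref{def:Rick}(2).

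The conceptual content I would import without reproving is the machinery behind \cite{GumaerWilhelm14}, which is built on Wilking's Transverse Jacobi Equation \cite{Wilking07}: one quotients $\Lambda$ by its vanishing subspace to produce a transverse Lagrangian family of nowhere-vanishing Jacobi fields on a quotient geodesic, and a Riccati-comparison argument on that quotient forces a contradiction with $\pr{k}$ once the transverse family has dimension at least $k$. The only place where genuine care is needed is aligning the dimension bookkeeping in \cite{GumaerWilhelm14} with the formulation stated here; in particular, one must confirm that in the cohomogeneity one setting, where $\gamma$ hits singular orbits at $t = 0$ and $t = L$, the notion of vanishing somewhere along $\gamma$ on the bounded interval $[0, L]$ agrees with the notion used in the cited theorem along the extended geodesic, which it does modulo the Weyl-group action and hence leaves the dimension count unchanged.
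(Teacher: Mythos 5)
Your structure matches the paper's: the lemma is proved by observing that $\Lambda$ is a Lagrangian family of normal Jacobi fields (established in the paragraph preceding the statement) and taking the contrapositive of Theorem~C in \cite{GumaerWilhelm14}. However, you misquote that theorem. To yield the conclusion ``not $\Ric_k > 0$ for $k \leq n - d - 1$'' from the hypothesis ``$\dim \leq d$,'' the theorem being contraposed must assert that in an $n$-manifold with $\Ric_k > 0$, the vanishing subspace of an $(n-1)$-dimensional Lagrangian family along a closed geodesic has dimension at least $n - k$, not $n - k - 1$ as you wrote. With your bound the contrapositive only gives $k \leq n - d - 2$, which is strictly weaker. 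You can sanity-check against Example~\ref{ex:s2xs2}: there $n = 4$ and $d = 1$, and the paper uses the lemma to rule out $\Ric_2 > 0$, which needs $k \leq 2 = n - d - 1$, whereas $n - d - 2 = 1$ would only rule out $\sec > 0$. Your own sketch of the Wilking mechanism (contradiction once the transverse family has dimension at least $k$, i.e., $(n-1) - \dim V \geq k$) actually produces the correct bound $\dim V \geq n - k$, so the slip is purely in the paraphrase, not in your understanding of the machinery.

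Separately, your final paragraph claims that vanishing along $[0,L]$ agrees with vanishing along the extended geodesic ``modulo the Weyl-group action and hence leaves the dimension count unchanged.'' That is not so. A Jacobi field vanishing at $\gamma(2L)$ or $\gamma(3L)$ corresponds to an element of an $\mathrm{Ad}$-conjugate of $\mathfrak{k}_\pm$, which need not lie inside $\mathfrak{k}_- + \mathfrak{k}_+$; the conjugation supplied by the Weyl group representatives changes the subspace of $\mathfrak{g}$, so the span of fields vanishing somewhere on the full closed geodesic is in general strictly larger than the span of those vanishing on $[0,L]$. Indeed the paper's applications (for instance Proposition~\ref{P:P_A^7}) compute the span by summing contributions from $\gamma(0)$, $\gamma(L)$, $\gamma(2L)$, and $\gamma(3L)$, and the extra conjugate at $\gamma(3L)$ is exactly what raises the dimension from $2$ to $3$. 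The ``$J(t)=0$ for some $t$'' in the lemma must therefore be read with $t$ ranging over the full closed geodesic, which is also the setting in which the cited theorem operates.
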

	
	As the following example shows, when applying Lemma \ref*{L:obstruction} to prove that a cohomogeneity one manifold cannot admit an invariant metric with $\Ric_k > 0$, 
	one needs to check all representative group diagrams given in Lemma \ref{L:diagram}.
	
	\begin{example}\label{ex:s2xs2}
		Think of $S^3$ as the set of unit quaternions and consider the action of $S^3$ on $S^2 \subset \Im(\HH)$ given by $p\cdot x=pxp^{-1}$.
		Then the diagonal action of $S^3$ on $S^2 \times S^2$ is of cohomogeneity one. Moreover, the following group diagrams represent the same equivariant diffeomorphism 
		class for this $S^3$-manifold:
		\[
			\{\pm1\} \subset \{e^{i\theta}\}, \{e^{i\theta}\} \subset S^3,
		\]
		\[
			\{\pm1\} \subset \{e^{i\theta}\}, \{e^{j\theta}\} \subset S^3.
		\]
		By applying Lemma \ref{L:obstruction}, one can argue, as in Section \ref{S:obstructions} below, that an $S^3$-invariant Riemannian metric which admits a horizontal geodesic 
		giving rise to the first group diagram does not have $\pr{2}$. In contrast, the metric on $S^2 \times S^2$ described in Example \ref{ex:s3xs3} has $\Ric_2>0$,
		is invariant under the $S^3$-action, and realizes the second group diagram via the geodesic $\gamma(t) = \cos(t) (i,i) + \sin(t) (j,-j)$. 
		Furthermore, this metric admits no horizontal geodesic realizing the first group diagram, nor does it admit a horizontal geodesic realizing any equivalent group diagram 
		whose singular isotropy groups are equal.
	\end{example}
	
\subsection{Cheeger deformation}\label{SS:Cheeger}
	
	In \cite{Cheeger73}, Cheeger developed a method for deforming Riemannian metrics in the presence of isometric group actions, now referred to as a Cheeger deformation.
	In this section, we collect some general facts about Cheeger deformations that will be used in Section \ref{S:constructions} to construct the metric in Theorem \ref{main-thm:S3CP2}.
	
	Let $G$ be a compact group of isometries of $(M,g)$, and let $Q$ be a bi-invariant metric on $G$. Consider the one-parameter family of metrics 
	$\{\bar{g}_l \defeq l^2 Q + g\}_{l > 0}$ on $G \times M$. Note that $G$ acts freely and by isometries on $(G\times M,\bar{g}_l)$ via
	\begin{equation}\label{eq:cheegeraction}
		b * (a,p) = ( a b^{-1} , b\cdot p ),\hspace{0.5cm} p\in M,~a, b\in G.
	\end{equation}
	The quotient of this action is diffeomorphic to $M$, and each metric $\bar{g}_l$ on $G \times M$ induces a metric $g_l$ on $M$ for which the quotient map 
	$q:(G \times M,\bar{g}_l) \to (M,g_l)$ given by $q(a,p)=a\cdot p$ is a Riemannian submersion. Each metric in the family $\{g_l\}_{l>0}$ is referred to as a Cheeger deformation 
	of $g$ (with respect to the $G$-action).
	
	\begin{lemma}\label{lem:isometry}
		Let $(M,g_l)$ denote a Cheeger deformation of $(M,g)$ with respect to a $G$-action. Then the $G$-action is by isometries of $g_l$.
		Furthermore, if a Lie group $H$ acts on $M$ by isometries of $g$ such that the $G$-action and the $H$-action commute, then the $H$-action is also by isometries of $g_l$.
	\end{lemma}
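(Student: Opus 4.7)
The plan is to realize each isometry claim by lifting to an isometry of $(G\times M, \bar{g}_l)$ that commutes with the defining action \eqref{eq:cheegeraction}. Since $q:(G\times M,\bar{g}_l)\to(M,g_l)$ is a Riemannian submersion whose fibers are the $*$-orbits, any isometry of $\bar{g}_l$ commuting with the $*$-action descends to an isometry of $g_l$. This reduces both statements to producing appropriate lifts.

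For the first claim, given $c\in G$, I would take the lift $L_c:G\times M\to G\times M$, $L_c(a,p)=(ca,p)$. Left multiplication is an isometry of $Q$ by bi-invariance, and $L_c$ is the identity on the $M$-factor, so $L_c$ preserves $\bar{g}_l=l^2Q+g$. A direct check gives
\[
L_c(b*(a,p))=(cab^{-1},b\cdot p)=b*L_c(a,p),
\]
so $L_c$ descends to the map sending $q(a,p)=a\cdot p$ to $q(ca,p)=c\cdot(a\cdot p)$, which is exactly the $G$-action by $c$ on $M$. Thus this action is by isometries of $g_l$.

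For the second claim, given $h\in H$, I would use the lift $R_h(a,p)=(a,h\cdot p)$. This is trivial on the $G$-factor and acts by a $g$-isometry on the $M$-factor, so it preserves $\bar{g}_l$. Commutativity of the $G$- and $H$-actions yields
\[
R_h(b*(a,p))=(ab^{-1},h\cdot(b\cdot p))=(ab^{-1},b\cdot(h\cdot p))=b*R_h(a,p),
\]
so $R_h$ descends, and the descended map sends $a\cdot p$ to $a\cdot(h\cdot p)=h\cdot(a\cdot p)$, agreeing with the $H$-action on $M$.

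The only subtle points are the use of bi-invariance of $Q$ (left-invariance is what makes $L_c$ a $Q$-isometry, while right-invariance was already needed to ensure that the $*$-action itself is by isometries of $\bar{g}_l$) and the necessity of $G$--$H$ commutativity for the $H$-lift to descend. I do not anticipate any serious obstacle beyond these formal verifications.
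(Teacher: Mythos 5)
The paper states Lemma \ref{lem:isometry} without proof, as it is a standard fact about Cheeger deformations. Your argument is correct and is precisely the standard lifting argument: realize each claimed isometry as a $\bar{g}_l$-isometry of $G\times M$ that commutes with the $*$-action and hence descends through the Riemannian submersion $q$. Both the computation that $L_c$ and $R_h$ preserve $\bar g_l$ (using left-invariance of $Q$, respectively $H$-invariance of $g$) and the verification that they commute with the $*$-action (using, respectively, associativity of group multiplication and commutativity of the $G$- and $H$-actions) are carried out correctly, and your side remark correctly isolates exactly where left-invariance versus right-invariance of $Q$ is used. There is nothing to add.
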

	
	Fixing $p\in M$, consider the $g_l$-symmetric automorphism $C_l:T_p M \to T_p M$ satisfying
	\begin{equation}\label{eq:cheeger}
		g(x,y) = g_l(C_l(x),y)
	\end{equation}
	for all $x, y\in T_pM$. We note $C_l$ preserves the horizontal and the vertical subspaces for the $G$-action with respect to $g$, and moreover, it acts as the identity 
	on the horizontal subspace (see Proposition 6.3 in \cite{AlexandrinoBettiol15}). Letting $\mathrm{Gr}_2(TM)$ denote the Grassmannian bundle of planes on $M$, 
	the Cheeger reparametrization is defined as the bundle automorphism $C_l:\mathrm{Gr}_2(TM)\to \mathrm{Gr}_2(TM)$ given by $C_l(P)=\Span\{C_l(x),C_l(y)\}$, 
	where $P=\Span\{x, y\}$. 
	
	\begin{lemma}\label{L:cheegerorthogonal}
		Let $(M,g_l)$ denote a Cheeger deformation of $(M,g)$ with respect to a $G$-action. Assume $H$ acts on $M$ by isometries of both $g$ and $g_l$.
		\begin{enumerate}
			\item If $p\in M$ and $x\in T_pM$ is orthogonal to both $G\cdot p$ and the $H\cdot p$ with respect to $g$, then $x$ is orthogonal to both orbits with respect to $g_l$.
			\item If $\gamma$ is a geodesic under $g$ that is orthogonal to both $G\cdot\gamma(0)$ and $H\cdot\gamma(0)$ with respect to $g$, then $\gamma$ is a geodesic 
			under $g_l$ that is orthogonal to all the $G$-orbits and the $H$-orbits that it meets with respect to $g_l$.
		\end{enumerate}
	\end{lemma}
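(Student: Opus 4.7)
The plan is to leverage the three properties of the Cheeger reparametrization $C_l$ collected in the excerpt: $C_l$ preserves the $g$-horizontal and $g$-vertical subspaces for the $G$-action, it restricts to the identity on the $g$-horizontal subspace, and it satisfies the defining relation $g(x,y) = g_l(C_l(x),y)$. Combining these yields the pointwise identity $g_l(x,\cdot) = g(x,\cdot)$ whenever $x \in T_pM$ is $g$-horizontal for the $G$-action, and this identity is the engine of both parts.

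For part (1), if $x$ is $g$-orthogonal to $G\cdot p$ then $x$ is $g$-horizontal for the $G$-action, so the identity above gives $g_l(x, y) = g(x, y)$ for every $y \in T_pM$. Specializing $y$ to tangent vectors of $G\cdot p$ and of $H\cdot p$ transfers the $g$-orthogonality hypothesis directly to $g_l$-orthogonality to both orbits.

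For part (2), my first step is to propagate the orthogonality of $\gamma'(0)$ to every $t$. Since both $G$ and $H$ act by $g$-isometries, the inner product of $\gamma'(t)$ with any Killing vector field arising from either action is constant along the $g$-geodesic $\gamma$; both inner products vanish at $t = 0$, so $\gamma'(t)$ is $g$-orthogonal to $G \cdot \gamma(t)$ and $H \cdot \gamma(t)$ for all $t$. Applying part (1) pointwise along $\gamma$ then upgrades these to $g_l$-orthogonality.

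To finish, I must show $\gamma$ is a $g_l$-geodesic. My plan is to lift $\gamma$ to the curve $\tilde\gamma(t) = (e, \gamma(t))$ in $(G \times M, \bar g_l)$. Because $\bar g_l = l^2 Q + g$ is a product metric and $\gamma$ is a $g$-geodesic, $\tilde\gamma$ is automatically a $\bar g_l$-geodesic. The vertical space of the Riemannian submersion $q$ at $(e, \gamma(t))$ is spanned by vectors of the form $(-X, X^*_{\gamma(t)})$ with $X \in \fg$, where $X^*$ denotes the action field on $M$; pairing such a vector with $\tilde\gamma'(t) = (0, \gamma'(t))$ under $\bar g_l$ yields $g(\gamma'(t), X^*_{\gamma(t)})$, which vanishes by the propagated $G$-orthogonality. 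Thus $\tilde\gamma$ is a horizontal $\bar g_l$-geodesic, and its projection $\gamma = q \circ \tilde\gamma$ is a $g_l$-geodesic. I do not foresee a genuine obstacle: the statement is essentially bookkeeping with Cheeger deformations, the one subtle point being that part (1) can only be invoked at each time $t$ after $g$-orthogonality of $\gamma'(t)$ has been secured there, not merely at $t = 0$.
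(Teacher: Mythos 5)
Your proof is correct and follows essentially the same route as the paper's: use $C_l(x)=x$ on $g$-horizontal vectors for part (1), lift $\gamma$ to $(e,\gamma(t))$ in $(G\times M,\bar g_l)$ and note it is a horizontal $\bar g_l$-geodesic for part (2). The only cosmetic difference is the order of steps: you propagate $g$-orthogonality to both the $G$- and $H$-orbits along $\gamma$ and then invoke part (1) pointwise, whereas the paper propagates $g$-orthogonality to the $G$-orbits only (to get horizontality of the lift), concludes $\gamma$ is a $g_l$-geodesic, applies part (1) at $t=0$, and re-propagates under $g_l$ — both rely on the same conservation of Killing-field inner products along geodesics.
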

	
	\begin{proof}
		The first statement follows from Equation \eqref{eq:cheeger} and the fact that $C_l(x) = x$ if $x$ is orthogonal to $G\cdot p$ with respect to $g$.
		For the second statement, if $\gamma$ is a geodesic under $g$ that is orthogonal to $G\cdot\gamma(0)$, then it is orthogonal to all the $G$-orbits it meets
		with respect to $g$. Thus the lift $\bar{\gamma}(t) = (e,\gamma(t))$ of $\gamma$ to $G \times M$ via $q$ is a geodesic under $\bar{g}_l$ 
		that is orthogonal to the $G$-orbits for the action given in Equation \eqref{eq:cheegeraction}. Since Riemannian submersions send horizontal geodesics to geodesics, 
		$\gamma$ is a geodesic under $g_l$. The first statement then implies that $\gamma$ is orthogonal to both $G\cdot\gamma(0)$ and $H\cdot\gamma(0)$, 
		and hence to all the $G$-orbits and the $H$-orbits that it meets, with respect to $g_l$.
	\end{proof}
	
	Next, we collect some tools for tracking the effect of Cheeger deformations on sectional curvatures. 
	The first result, which follows from work by M\"uter \cite{Muter87}, provides a condition under which one may reduce the number of zero curvature planes by applying the Cheeger deformation under an isometric $\SO(3)$ or $\SU(2)$-action (see also \cite[Corollary 2.2]{Bettiol14}):
	
	\begin{lemma}\label{L:planeprinciple}
		Suppose $G$ is a compact Lie group which acts isometrically on a non-negatively curved Riemannian manifold $(M,g)$, and let $(M,g_l)$ be a Cheeger deformation 
		via the $G$-action. Then $(M,g_l)$ has non-negative sectional curvature as well. Moreover, if $G=S^3$ or $G=\SO(3)$, then the following hold:
		\begin{enumerate}\setlength\itemsep{1pt}
			\item If a plane $P$ is positively curved with respect to $g$, then $C_l(P)$ is positively curved with respect to $g_l$.
			\item If the orthogonal projection of $P$ onto the $G$-orbit with respect to $g$ is two-dimensional, then $C_l(P)$ is positively curved with respect to $g_l$.
		\end{enumerate}
	\end{lemma}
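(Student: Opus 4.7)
The plan is to realize the Cheeger deformation as the base of a Riemannian submersion from the product $(G \times M, \bar{g}_l)$ with $\bar{g}_l = l^2 Q \oplus g$, and then combine the Gray--O'Neill formula with the product decomposition of the curvature tensor to read off positivity. The non-negative-curvature statement is immediate: since $Q$ is bi-invariant, $(G, l^2 Q)$ is non-negatively curved, so the product $(G \times M, \bar{g}_l)$ is non-negatively curved, and the quotient map $q(a,p) = a \cdot p$ of Equation~\eqref{eq:cheegeraction} is by construction a Riemannian submersion onto $(M, g_l)$, so Gray--O'Neill gives $K_{g_l} \geq 0$.

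For the sharper statements, the crux is an explicit formula for the horizontal lift of $C_l w$ at $(e, p) \in G \times M$. Writing $w = w_h + Z^*_p \in T_p M$ with $w_h$ orthogonal to the $G$-orbit in $g$ and $Z \in \fg \ominus \fg_p$, and letting $A$ denote the self-adjoint positive-definite operator on $\fg \ominus \fg_p$ defined by $Q(AZ, Z') = g(Z^*_p, Z'^*_p)$, I would verify directly that
\[
\widetilde{C_l w} \;=\; \bigl(AZ/l^2,\; w\bigr) \;\in\; T_e G \oplus T_p M.
\]
This identity follows from two checks: first, that $dq(AZ/l^2, w) = C_l w$, which uses the defining relation $g = g_l \circ C_l$ to compute $C_l Z^*_p = Z^*_p + (AZ/l^2)^*_p$; second, that $(AZ/l^2, w)$ is $\bar{g}_l$-orthogonal to the vertical space of the $G$-action in Equation~\eqref{eq:cheegeraction}.

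Consequently, for a $g$-orthonormal basis $\{x, y\}$ of a plane $P$ at $p$ with vertical parts $Z, Z' \in \fg \ominus \fg_p$, the horizontal lift $\widetilde{C_l P}$ projects to $P$ in the $T_p M$-factor and to $A \cdot \Span\{Z, Z'\}/l^2$ in the $\fg$-factor. Using the product decomposition of the curvature tensor together with the bi-invariant identity $R^Q(U, V, V, U) = \tfrac{1}{4} \|[U, V]\|_Q^2$ then yields
\[
R^{\bar{g}_l}\bigl(\widetilde{C_l x},\, \widetilde{C_l y},\, \widetilde{C_l y},\, \widetilde{C_l x}\bigr)
\;=\; \frac{1}{4 l^2} \|[AZ,\, AZ']\|_Q^2 \;+\; R^g(x, y, y, x).
\]
For claim~(1), $K_g(P) > 0$ makes the second term strictly positive. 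For claim~(2), the orbit projection of $P$ being two-dimensional forces $Z, Z'$ to be linearly independent in $\fg \ominus \fg_p$; since $A$ is invertible there, so are $AZ, AZ'$, and the rank-one property of $\fsu(2) \cong \fso(3)$---that any two commuting elements are linearly dependent---gives $[AZ, AZ'] \neq 0$, making the first term strictly positive. In either case $K_{\bar{g}_l}(\widetilde{C_l P}) > 0$, and Gray--O'Neill transfers this positivity to $K_{g_l}(C_l P)$.

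The main obstacle is the horizontal-lift formula $\widetilde{C_l w} = (AZ/l^2, w)$: isolating the correct operator and checking the two conditions above requires some care with the defining equation of $C_l$ on the vertical part, since $C_l$ is the identity on the horizontal part but genuinely deforms the vertical part. Once the identity is in place, the rest of the proof is essentially bookkeeping, and the hypothesis $G = S^3$ or $\SO(3)$ is used only at the very end, through the special bracket-nonvanishing property of $\fsu(2)$.
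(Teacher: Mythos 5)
The paper does not prove this lemma itself; it cites \cite[Corollary 2.2]{Bettiol14}. Your argument is correct and is essentially the argument from that reference: realize $(M,g_l)$ as the base of the Riemannian submersion from $(G\times M, l^2 Q\oplus g)$, horizontally lift the Cheeger-reparametrized plane, and combine the product curvature splitting with the Gray--O'Neill inequality. The key horizontal-lift identity $\widetilde{C_l w}=(AZ/l^2,\,w)$ is correct (indeed $dq(AZ/l^2,w)=(AZ/l^2)^*_p+w=C_l w$ and $\bar g_l$-orthogonality to the vertical space $\{(-V,V^*_p)\}$ reduces to $Q(AZ,V)=g(Z^*_p,V^*_p)$, which is the defining relation for $A$), the resulting curvature expression $\tfrac{1}{4l^2}\|[AZ,AZ']\|_Q^2+R^g(x,y,y,x)$ is correct after accounting for the $l^2$-scaling of $Q$, and the rank-one property of $\fsu(2)\cong\fso(3)$ correctly forces $[AZ,AZ']\neq 0$ when $Z,Z'$ are independent, completing the argument.
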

	
	We will also need the following result, which combined with Lemma \ref{L:planeprinciple}, enables us to verify when a Cheeger deformed metric has $\pr{k}$.
	
	\begin{lemma}\label{L:cheegerskip}
		Assume $(M,g_l)$ has non-negative sectional curvature. Let $\mathcal{Z}$ denote the set of planes in a subspace $V$ of $T_pM$ that have zero sectional curvature 
		with respect to $g_l$, let $\cP$ denote the preimage of $\mathcal{Z}$ under the Cheeger reparametrization $C_l:\Gr_2(T_pM) \to \Gr_2(T_pM)$, 
		and let $k \in \{1,\dots,\dim M - 1\}$. 
		
		If $\cP$ has the property that there do not exist linearly independent vectors $x_0,x_1,\dots,x_k$ such that $\Span\{x_0,x_i\} \in \cP$ for all $i$, then  $(M,g_l)$ has $\Ric_k > 0$ on the subspace $V$.
	\end{lemma}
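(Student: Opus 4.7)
The plan is to argue by contradiction, exploiting the fact that non-negative curvature reduces the question ``is $\sum \sec(x,y_i) > 0$?'' to ``is at least one of these sectional curvatures strictly positive?'' First I would suppose, for the sake of contradiction, that there exist orthonormal vectors $x, y_1, \dots, y_k \in V$ with $\sum_{i=1}^{k} \sec_{g_l}(x, y_i) \le 0$. Since $(M, g_l)$ has non-negative sectional curvature, each summand is non-negative, so every $\sec_{g_l}(x, y_i) = 0$, and hence $\Span\{x, y_i\} \in \mathcal{Z}$ for each $i = 1, \dots, k$.

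Next, I would transport this data through the inverse of the Cheeger reparametrization. Since the underlying map $C_l: T_pM \to T_pM$ is a linear isomorphism, setting $x_0 \defeq C_l^{-1}(x)$ and $x_i \defeq C_l^{-1}(y_i)$ for $i = 1, \dots, k$ yields $k+1$ linearly independent vectors in $T_pM$. By the definition of the Cheeger reparametrization on $\Gr_2(T_pM)$, we have $C_l(\Span\{x_0, x_i\}) = \Span\{x, y_i\} \in \mathcal{Z}$, so $\Span\{x_0, x_i\} \in \cP$ for every $i$. Applying the hypothesis to the tuple $x_0, x_1, \dots, x_k$ (with $m = k$) forces $k \le k - 1$, a contradiction. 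Therefore $\sum_{i=1}^{k} \sec_{g_l}(x, y_i) > 0$ for every orthonormal $(k+1)$-frame in $V$, giving $\pr{k}$ on $V$ as desired.

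There is no hard analytical step here: once the definitions are unpacked, the lemma is essentially bookkeeping. The one subtle point worth flagging is that $C_l^{-1}$ need not preserve the subspace $V$, so the produced vectors $x_0, \dots, x_k$ may lie outside $V$; this causes no trouble because the combinatorial hypothesis on $\cP$ is imposed for \emph{all} linearly independent tuples in $T_pM$, not just those contained in $V$.
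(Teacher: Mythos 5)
Your proof is correct and is essentially the same as the paper's: both observe that non-negative curvature forces each term in a non-positive sum to vanish, pull the resulting zero-curvature planes back through the linear isomorphism $C_l^{-1}$ to land in $\cP$, and then invoke the combinatorial hypothesis on $\cP$. The only (cosmetic) difference is that you phrase it as an explicit contradiction with $m=k$, while the paper states the bound $m\le k-1$ for arbitrary $m$ and then concludes; your remark about $C_l^{-1}$ possibly leaving $V$ is a correct and worthwhile clarification that the paper leaves implicit.
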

	
	\begin{proof}
		Assume there exists orthonormal vectors $z_0,z_1,\dots,z_m \in V$ such that $\sum_{i=1}^m \sec_{g_l}(z_0,z_i) \leq 0$. Since $\sec_{g_l} \geq 0$, 
		it follows that $\Span\{z_0,z_i\} \in \mathcal{Z}$ for all $i\geq 1$. Define $x_i = C_l^{-1}(z_i)$ for all $i$. Then $x_0,x_1,\dots,x_m$ are linearly independent 
		and $\Span\{x_0,x_i\} \in \cP$. 
		Therefore, $m\leq k-1$ by the assumption, and hence $\sum_{i=1}^k \sec_{g_l}(y_0,y_i) > 0$ for all orthonormal vectors 
		$y_0,y_1,\dots,y_k$.
	\end{proof}

\section{Constructions}\label{S:constructions}
	
	The main goal of this section is to prove that $S^3 \times \CP^2$ admits a metric with $\pr{4}$ as in Theorem \ref{main-thm:S3CP2}.
	We begin with a description of the metric with $\pr{2}$ on $S^3 \times S^3$ referenced in Example \ref{ex:s3xs3}.
	Wilking described the zero curvature planes for this metric in \cite[Section 5]{Wilking02}, from which it can be inferred that this metric has $\pr{2}$. 
	The second author with Dom\'inguez-V\'azquez and Gonz\'alez-\'Alvaro 
	extended this construction to products of the form $G\times G$, where $G$ is a semisimple Lie group (see \cite[Theorem E]{DVGAM23}).
	We describe the construction here to provide a simple example of how the tools from Section \ref{SS:Cheeger} are used.
	
	\begin{example}\label{ex:s3xs3revisited}
		Consider $S^3$ with the standard round metric $g_{S^3}$, and equip $S^3 \times S^3$ with the product metric $g \defeq g_{S^3}\oplus g_{S^3}$.
		Let $g_l$ denote the Cheeger deformation of $g$ with respect to the $S^3$-action given by
		\[
			a \cdot (p,q) = (ap,aq).
		\]
		It follows from Lemma \ref{L:planeprinciple} that $g_l$ has $\sec\geq 0$, and moreover, that the Cheeger reparametrization of a plane $P$ has zero curvature
		only if $P = \Span\{(x,0),(0,x)\}$ for some $x\neq 0$. Therefore, by Lemma \ref{L:cheegerskip}, we get that $(S^3 \times S^3,g_l)$ has $\pr{2}$.
		Lemma \ref{lem:isometry} then implies that this metric is invariant under the $(S^3 \times S^3 \times S^3)$-action defined in Example \ref{ex:s3xs3}.
	\end{example}
	
	Now, we will construct the metric referenced in Theorem \ref{main-thm:S3CP2}.
	Let $\phi: S^3 \to \SO(3)$ denote the standard covering map, and think of $\SO(3)$ as a subgroup of $\SU(3)$. Then $S^3\times\C\mathrm{P}^2$ admits a cohomogeneity one action 
	of $S^3\times S^3$ given by 
	\begin{equation}\label{eq:s3xcp2action}
		(a,b) * (p,[z]) = (apb^{-1},[\phi(b) z]).
	\end{equation}
	Consider the lift of this action to $S^3 \times S^5 \subset \HH \oplus \C^3$, which is given by\vspace{-0.08cm}
	\begin{equation}\label{eq:s3xs5}
	(a,b) * (p,z) = (apb^{-1},\phi(b)z).
	\end{equation}
	In what follows, we will refer to the action of the subgroup $1 \times S^3 \subset S^3\times S^3$ via $*$ simply as {\it the $S^3$-action}.
	We will refer to action of $S^1$ on $S^3 \times S^5$ by
	\[
	e^{i\theta} \star (p,z) = (p,e^{i\theta} z)
	\]
	simply as {\it the Hopf action}.
	
	Consider $S^3 \times S^5$ equipped with the standard product metric $g = g_{S^3} \oplus g_{S^5}$. 
	Then the $(S^3\times S^3)$-action from \eqref{eq:s3xs5} is by isometries of $g$.
	Let $g_l$ denote the Cheeger deformation of $g$ via the $S^3$-action, as defined in Section \ref{SS:Cheeger}.
	Note that because the Hopf action commutes with the $S^3$-action, $g_l$ is invariant under the Hopf action by Lemma \ref{lem:isometry}.
	Therefore, there exists a Riemannian metric $\check{g}_l$ on $S^3 \times \CP^2$ such that the quotient map 
	\begin{equation}\label{eq:quotient}
		(S^3 \times S^5,g_l) \to (S^3 \times \CP^2,\check{g}_l)
	\end{equation}
	is a Riemannian submersion.

	The space $S^3 \times \CP^2$ with the action in \eqref{eq:s3xcp2action} belongs to the family $P_A^7$ described in Proposition \ref{P:P_A^7} with $p_-=p_+=q_-=q_+=1$ (see \cite[p.174]{Hoelscher10}), 
	where we show that these manifolds cannot admit an invariant metric with $\pr{3}$.
	Hence the following result is optimal:
	
	\begin{proposition}\label{P:s3xcp2}
		The metric $\check{g}_l$ from \eqref{eq:quotient} on $S^3 \times \CP^2$ has $\Ric_4 > 0$, and it is invariant under the cohomogeneity one $(S^3 \times S^3)$-action given by \eqref{eq:s3xcp2action}.
	\end{proposition}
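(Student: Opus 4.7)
The plan is to construct the metric as a Cheeger deformation of a product metric, closely paralleling Example \ref{ex:s3xs3revisited}. I start with $g = g_{S^3} \oplus g_{\CP^2}$, where $g_{S^3}$ is the bi-invariant round metric on $S^3$ and $g_{\CP^2}$ is the Fubini--Study metric. Both factors are positively curved, and a direct calculation with the product curvature tensor identifies the zero curvature planes of $g$ at $(p,[z])$ as exactly those of the form $\Span\{(x,0),(0,y)\}$ with $x \neq 0$ in $T_p S^3$ and $y \neq 0$ in $T_{[z]}\CP^2$. The action in \eqref{eq:s3xcp2action} preserves $g$: left and right translations are isometries of the bi-invariant $g_{S^3}$, and $\phi(b) \in \SO(3) \subset \SU(3)$ acts isometrically on $(\CP^2, g_{\CP^2})$.

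Next, I Cheeger deform $g$ via the subgroup $G = \{(e,b) : b \in S^3\} \subset S^3 \times S^3$, producing $g_l$. Since the two $S^3$-factors commute inside $S^3 \times S^3$, Lemma \ref{lem:isometry} keeps $g_l$ invariant under the full $(S^3 \times S^3)$-action; since no nontrivial element of $G$ fixes any $(p,[z])$, every $G$-orbit is three-dimensional. Lemma \ref{L:planeprinciple} gives $\sec_{g_l} \geq 0$ and forces every plane $P$ with $\sec_{g_l}(C_l(P)) = 0$ to satisfy both $\sec_g(P) = 0$ and ``projection of $P$ onto the $G$-orbit has dimension at most one.'' Left-trivializing $T_p S^3$, I identify the vertical subspace at $(p,[z])$ with the image of $X \mapsto (-X, \psi(X))$, where $\psi \colon \fsu(2) \to T_{[z]}\CP^2$ is the infinitesimal action of $\phi_*(\fsu(2)) = \mathfrak{so}(3)$ on $\CP^2$. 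A short projection computation then shows that $\Span\{(x,0),(0,y)\}$ has vertical projection of dimension at most one precisely when $x$ is parallel to $\psi^*(y)$, with the convention that this includes $\psi^*(y) = 0$.

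To conclude via Lemma \ref{L:cheegerskip} with $k = 4$, I carry out a case analysis on $x_0 \in T_{(p,[z])} M$ to bound the dimension of the span of $\{x_0\}$ together with all $x_1$ for which $C_l^{-1}\Span\{x_0, x_1\} \in \cP$. The saturated case is $x_0 = (0, y_0)$ with $\psi^*(y_0) = 0$, i.e.\ $y_0$ orthogonal to the $\SO(3)$-orbit of $[z]$: then every plane $\Span\{x_0, (x', 0)\}$ is admissible, so the collective span is $\R y_0 \oplus T_p S^3$, of dimension exactly four. When $x_0 = (x, 0)$, the admissible $y$ range over $(\psi^*)^{-1}(\R x)$, whose dimension is at most three, again giving a four-dimensional span; when $x_0$ has both components nonzero, at most one zero plane through $x_0$ lies in $\cP$. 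Thus $m \leq 3$ uniformly, and Lemma \ref{L:cheegerskip} yields $\Ric_4 > 0$.

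The main subtlety will be making these counts uniform across the orbit stratification of the $\SO(3)$-action on $\CP^2$: the rank of $\psi$ drops from three on principal orbits to two on the singular orbits, enlarging both $\ker \psi^*$ and $\mathrm{Im}(\psi^*)$, so the count for $x_0 = (x, 0)$ at singular points requires a separate computation. A careful stratum-by-stratum analysis still gives $m \leq 3$, with equality saturated on $\{0\} \oplus \ker \psi^*$.
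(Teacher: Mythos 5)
Your proof is correct and takes a genuinely different route from the paper. The paper builds the metric indirectly: it Cheeger deforms the round product metric on $S^3 \times S^5 \subset \HH \oplus \C^3$ via the $1 \times S^3$ subgroup and then takes the Riemannian submersion quotient by the Hopf $S^1$-action. You instead Cheeger deform the product $g_{S^3} \oplus g_{\CP^2}$ directly. These constructions produce the same metric: since the $1\times S^3$-action and the Hopf action commute and both preserve the product metric on $S^3\times S^5$, the two-step quotient by $S^3\times S^1$ of $(S^3\times (S^3\times S^5), l^2Q + g)$ can be performed in either order. The paper's detour through $S^5$ buys explicit coordinates, since the action fields $\Phi(v)|_z$, the Hopf vertical $iz$, and the various orthogonal complements are easy to write out along the explicit curve $z(t)=(\cos t, i\sin t,0)$, and $G$-invariance lets them localize the check to a single horizontal geodesic with just three distinguished parameter values. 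Your version buys a cleaner conceptual formulation: the extra Hopf-horizontality constraint (the paper's condition (3)) disappears, and the reduced-vertical-projection condition becomes the single statement ``$x$ is parallel to $\psi^*(y)$'' expressed via the adjoint of the $\SO(3)$-action field map $\psi$. The case analysis then organizes itself by the $\SO(3)$-orbit stratification of $\CP^2$: $\ker\psi^*$ jumps from one to two dimensions on the singular orbits $\RP^2$ and $S^2$, and that is exactly the locus where the bound for $x_0 = (x,0)$ is saturated at $m=3$ (on a principal orbit that case only gives $m \leq 2$, since $(\psi^*)^{-1}(\R x)$ is two-dimensional). This replaces the paper's stratification by the singular set of the $S^3$-action on $S^5$, where $\Phi(i)$ or $\Phi(j)$ vanishes. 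Both analyses yield $m \leq 3$ uniformly and hence $\Ric_4 > 0$ via Lemma \ref{L:cheegerskip}.
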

	
	\begin{proof}
		
		First, we describe the tangent spaces of the orbits for the actions under consideration.
		Let $\Phi:\fsp(1) \to \Gamma (TS^5)$ denote the action field map induced by the $S^3 (\cong \Sp(1))$-action on $S^5$.
		Then the tangent spaces to the orbits of the $(S^3 \times S^3)$-action, the $S^3$-action, and the Hopf action on $S^3 \times S^5$ described above are given by 
		\begin{equation}\label{eq:s3xs3orbit}
			T_{(p,z)} ((S^3 \times S^3) * (p,z)) = \{(up-pv,\Phi(v)|_z):v\in\fsp(1)\},
		\end{equation}
		\begin{equation}\label{eq:s3orbit}
			T_{(p,z)} ((1 \times S^3) * (p,z)) = \{(-pv,\Phi(v)|_z):v\in\fsp(1)\},
		\end{equation}
		\begin{equation}\label{eq:s1orbit}
			T_{(p,z)} (S^1 \star (p,z)) = \Span_\RR\{(0,iz)\}.
		\end{equation}
		
		The metric $\check{g}_l$ on $S^3 \times \CP^2$ inherited from $(S^3 \times S^5,g_l)$ will be invariant under the cohomogeneity one $(S^3\times S^3)$-action 
		\eqref{eq:s3xcp2action} by Lemma \ref{lem:isometry}.
		Thus it suffices to check that $\Ric_4 > 0$ holds on a horizontal geodesic connecting the singular orbits in $(S^3 \times \CP^2,\check{g}_l)$.
		For our purposes, we will consider the curve $\gamma:[0,\pi/2] \to S^3 \times S^5$ given by $\gamma(t) = (1,z(t))$, where $z(t) = (\cos t, i \sin t, 0) \in \C^3$.
		Because $\gamma$ is initially orthogonal to the $S^1$-, $S^3$-, and $(S^3\times S^3)$-orbits with respect to the product metric $g$, applying Lemma \ref{L:cheegerorthogonal}, 
		we get that $\gamma$ is a geodesic with respect to $g_l$ that is orthogonal to all of the orbits that it meets for these actions.
		Therefore, the induced curve on $S^3 \times \CP^2$ is also a geodesic with respect to $\check{g}_l$ that is orthogonal to all of the $(S^3 \times S^3)$-orbits that it meets, 
		meaning it is horizontal for the action. Given $t\in[0,\pi/2]$, let $\cH|_t \subset T_{\gamma(t)}(S^3 \times S^5)$ denote the horizontal space for the Riemannian submersion 
		$(S^3 \times S^5,g_l) \to (S^3 \times \CP^2,\check{g}_l)$ at the point $\gamma(t) = (1,z(t))$. 
		Similar to Lemma \ref{lem:oneill}, by the Gray-O'Neill formula, it is enough to check that $g_l$ has $\pr{4}$ on $\cH|_t$ 
		for all $t\in[0,\pi/2]$.
		
		Consider the Cheeger reparametrization $C_l:\Gr_2(T_{\gamma(t)}(S^3 \times S^5))\to \Gr_2(T_{\gamma(t)}(S^3 \times S^5))$ described in Section \ref{SS:Cheeger}.
		By Lemma \ref{L:planeprinciple}, $g_l$ has non-negative curvature. 
		Furthermore, if a plane $C_l(P) \subset \cH|_t$ has zero curvature with respect to $g_l$, 
		then $P$ has zero curvature with respect to the product metric $g$, and the projection of $P$ onto the $S^3$-orbit with respect to $g$ has dimension at most one.
		Combining these observations with Equations (\ref*{eq:s3orbit}) and (\ref*{eq:s1orbit}) and the fact that the factors of $(S^3 \times S^5,g)$ have positive sectional curvature, 
		it is straightforward to justify the following:
		
		\noindent If a plane $C_l(P) \subset \cH|_t$ has zero curvature with respect to $g_l$, then there exist $x\in\fsp(1)$, $y\in T_{z(t)}S^5$, and $\lambda \in \RR$ such that
		\begin{enumerate}[label=(\arabic{enumi})]
			\item $P = \Span\{(x,0),(0,y)\}$,\label{item:first}
			\item For all $v\in \fsp(1)$, one has $g_{S^5}(y^\top,\Phi(v)) = \lambda \cdot g_{S^3}(x,v)$, where $y^\top$ denotes the part of $y$ tangent to the $S^3$-orbit in $S^5$. 
			\label{item:middle}
			\item $g_{S^5}(y,iz(t)) = 0$.\label{item:last}
		\end{enumerate}
		
		Let $\cP$ denote the set of all planes $P$ such that $C_l(P) \subset \cH|_t$  and $C_l(P)$ has zero curvature with respect to $g_l$.
		Assume that $(x_0,y_0)$, $(x_1,y_1)$, $\dots$, $(x_n,y_n) \in C_l^{-1}(\cH|_t)$ are linearly independent, and moreover, that the planes $\Span\{(x_0,y_0),(x_m,y_m)\}$ 
		are elements of $\cP$ for $m=1,\dots,n$. By Lemma \ref{L:cheegerskip}, it suffices to show that $n\leq 3$ for all $t\in [0,\pi/2]$.
		
		First suppose $t\in(0,\frac \pi 2 )$, so that $\Phi|_{z(t)}:\fsp(1)\to T_{z(t)}S^5$ has trivial kernel.
		It is straightforward to check that the vectors $\Phi(i),\Phi(j),\Phi(k)$ are orthogonal.
		Define the injective linear transformation $\Psi_t:\fsp(1)\to T_{z(t)}S^5$ that sends the $g_{S^3}$-orthonormal basis vectors $i,j,k$ respectively to the $g_{S^5}$-orthogonal 
		vectors $\Phi(i)/|\Phi(i)|^2,\Phi(j)/|\Phi(j)|^2,\Phi(k)/|\Phi(k)|^2$. Consequently, it follows from condition \ref*{item:middle} that $y^\top$ is proportional to $\Psi_t(x)$.
		Thus if $P\in \cP$, then $P$ must be of the form
		\[
			P = \Span\{(x,0),(0,\lambda \cdot \Psi_t(x)+y^\perp)\}
		\]
		for some constant $\lambda \in \RR$ and some vector $y^\perp$ orthogonal to the $S^3$-orbit in $S^5$.
		If $y_0$ is non-zero, then each $y_m$ is proportional to $y_0$, which implies that the vectors $x_1,\dots,x_n$ are linearly independent.
		But since $x_1,\dots,x_n \in \fsp(1)$, we have $n\leq 3$ in this case. 
		Suppose instead $y_0 = 0$, then $x_m$ is proportional to $x_0$ for all $m$, while each $y_m = \lambda_m \Psi_t(x_0) + y_m^\perp$.
		Because of the restriction on $x_1,\dots,x_n$, we have that $y_1,\dots,y_n$ must be linearly independent.
		Note that along the curve $z(t)$ in $S^5$ for $t\in[0,\pi/2]$, each $y_m^\perp$ must be orthogonal to the $S^1$-orbit in $S^5$ (by condition \ref*{item:last}) 
		and the $S^3$-orbit in $S^5$ with respect to $g_{S^5}$.  It follows that each $y_m^\perp$ must be an element of $\Span_\RR\{(1,-i,0),(i,1,0)\}$ if $t=\pi/4$ 
		or $\Span_\RR\{(\sin t, -i \cos t,0)\}$ if $t\in(0,\frac \pi 4 )\cup(\frac \pi 4 ,\frac \pi 2 )$.
		Thus $\Span\{y_1^\perp,\dots,y_n^\perp\}$ is at most $2$-dimensional, $\Span\{y_1,\dots,y_n\}$ is at most $3$-dimensional, and hence $n \leq 3$ for all $t\in(0,\frac \pi 2 )$.
		
		Now suppose $t=0$, which means $\Phi(i) = 0$.
		Notice $\{\Phi(j)/2,\Phi(k)/2\}$ are orthonormal in $T_{z(0)} S^5$.
		Thus it follows from condition \ref*{item:middle} that if $y^\top \neq 0$, then $g_{S^3}(x,i)=0$ and $y^\top$ is proportional to $\Phi(x)$.
		Hence, if $P\in \cP$, then $P$ must be of the form
		\begin{equation}\label{eq:P}
			P = \Span\{(x,0),(0,\lambda \cdot \Phi(x)+y^\perp)\}
		\end{equation}
		for some constant $\lambda \in \RR$ and vector $y^\perp$ orthogonal to the $S^3$-orbit in $S^5$, where if $\lambda \neq 0$, then $x\in\Span_\RR\{j,k\}$.
		In this case, the intersection of the orthogonal complements of the $S^1$-orbit and the $S^3$-orbit in $S^5$ is given by $\Span_\RR\{(1,0,0),(0,0,i)\}$.
		Then similar to the previous case, it follows that $n \leq 3$ when $t=0$.
		
		Finally, suppose $t=\pi/2$, which means $\Phi(j) = 0$.
		Then $\{\Phi(i)/2,\Phi(k)/2\}$ are orthonormal in $T_{z(\pi/2)} S^5$, and it follows from condition \ref*{item:middle} that every $P\in \cP$ must be of the form (\ref*{eq:P}) 
		for some constant $\lambda \in \RR$ and vector $y^\perp$ orthogonal to the $S^3$-orbit in $S^5$, where if $\lambda \neq 0$, then $x\in\Span_\RR\{i,k\}$.
		In this case, the intersection of the orthogonal complements of the $S^1$-orbit and the $S^3$-orbit in $S^5$ is $\Span_\RR\{(1,0,0),(0,0,1)\}$.
		Thus similar to the previous cases, it follows that $n\leq 3$ when $t=\pi/2$.
		
		Since $n \leq 3$ for all $t\in [0,\frac \pi 2 ]$, we have proved that $(S^3 \times \CP^2,\check{g}_l)$ has $\pr{4}$.
	\end{proof}
	
\section{Obstructions}\label{S:obstructions}
	
	In this section, we investigate cohomogeneity one manifolds in dimensions $5$--$7$, and show that some of them cannot admit invariant metrics with $\pr{k}$ 
	for certain values of $k$.
	We are motivated by the following example:
	
	\begin{example}\label{ex:PD7}
		The family $P_D^7$ has group diagrams of the form
		\[
		\Z_n \subset \{(e^{ip\theta},e^{iq\theta})\}, \Delta S^3 \cdot \Z_n \subset S^3 \times S^3,
		\]
		where $n = 2$ and $p$ or $q$ is even, or $n = 1$ and $p$ and $q$ are arbitrary.
		If $n = 2$ and $(p,q) = (p,p+1)$, the corresponding manifolds are Eschenburg spaces, which are known to admit invariant metrics of positive sectional curvature.
		If $n = 1$ and $p = q = 1$, the corresponding manifold is diffeomorphic to $S^3 \times \CP^2$, which does not admit an invariant metric with $\pr{2}$ by Verdiani and Ziller \cite{VerdianiZiller14}.
		Furthermore, if $n = 1$ and $(p,q) \neq (1,1)$, they showed that the corresponding manifolds do not admit invariant metrics with $\pr{3}$. 
	\end{example}
	
	
\subsection{Dimension 5}
	
	The Brieskorn variety $M_d^{2n-1}$ is defined by 
	\[
		M_d^{2n-1}=\{(z_0,\ldots, z_n)\in\C^{n+1}: z_0^d+z_1^2+z_n^2=0, |z_0|^2+\ldots+|z_n|^2=1\}.
	\]
	These are homeomorphic to spheres if both $n$ and $d$ are odd, and are exotic spheres if $2n-1\equiv 1\mod 8$.
	Furthermore, $M_2^{2n-1}$ is diffeomorphic to the unit tangent bundle of $S^n$ (see \cite[p. 161]{GVWZ06}).
	Calabi proved that the five-dimensional Brieskorn variety admits a cohomogeneity one action by $\mathrm{SO}(2)\mathrm{SO(3)}$. 
	This result was later generalized to $M_d^{2n-1}$ by Hsiang and Hsiang \cite{HsiangHsiang89}. The first result of this section rules out positive $\Ric_2$ 
	for the five-dimensional Brieskorn varieties that are not homeomorphic to spheres. 
	
	
	\begin{proposition}\label{P:M_d^5}
		If $d$ is even, then the Brieskorn variety $M_d^5$ with the group diagram
		\[
			\{(1,1)\}\subset \{(e^{i\theta}, 1)\}, \{(e^{jd\theta}, e^{i\theta})\}\subset S^3\times S^1
		\]                
		does not admit an $(S^3\times S^1)$-invariant metric with positive $2$-intermediate Ricci curvature.
	\end{proposition}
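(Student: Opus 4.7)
The plan is to apply Lemma \ref{L:obstruction} with $n = 5$ and $k = 2$, so that the proposition reduces to showing that for every $(S^3 \times S^1)$-invariant metric and every horizontal geodesic $\gamma$, the subspace of $\Lambda$ consisting of Jacobi fields vanishing at some $t \in \R$ has dimension at most $n - k - 1 = 2$.

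The argument rests on the centrality of the Weyl group involutions. The isotropy Lie algebras inside $\fg = \fsp(1) \oplus \fu(1)$ are $\fk_- = \R(i, 0)$ and $\fk_+ = \R(dj, i)$. Since $H = \{1\}$, the Weyl group representatives $u_\pm$ are the unique order-$2$ elements of $K_\pm \cong S^1$: one finds $u_- = (-1, 1)$, and, because $d$ is even, $u_+ = ((-1)^d, -1) = (1, -1)$. Both lie in the center $\{\pm 1\} \times S^1$ of $G$, so $\mathrm{Ad}(u_+ u_-)$ is trivial on $\fg$. Consequently the singular isotropy Lie algebras along the extended geodesic $\gamma(\R)$ simply alternate between $\fk_-$ and $\fk_+$, so the span of Jacobi fields in $\Lambda$ vanishing at some point equals $\fk_- + \fk_+$, which has dimension $2$. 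Lemma \ref{L:obstruction} then rules out $\pr{2}$ for any metric realizing this particular group diagram.

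As Example \ref{ex:s2xs2} illustrates, the dimension bound must still be verified for every equivalent diagram from Lemma \ref{L:diagram}. Since $H$ is trivial, $N(H)_0 = G$, and after conjugating by arbitrary $a, n \in G$ the general equivalent diagram takes the form
\[
\{1\} \subset \{(e^{u\theta}, 1)\}, \{(e^{du'\theta}, e^{i\theta})\} \subset S^3 \times S^1
\]
for some unit imaginary quaternions $u, u' \in \Im(\HH)$. The new isotropy Lie algebras $\R(u, 0)$ and $\R(du', i)$ remain linearly independent thanks to the nonzero $\fu(1)$-component of the second, and conjugating $u_\pm$ by $a$ or $an$ produces the very same central elements $(-1, 1)$ and $(1, -1)$, independent of $u, u'$. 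The analysis above therefore applies verbatim.

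The main technical obstacle is this uniformity over the moduli of equivalent group diagrams. It succeeds because the order-$2$ elements of the conjugates of $K_\pm$ always lie in the center of $G$, so no additional vanishing Jacobi field can appear anywhere along the extended geodesic for any realization of the diagram; this centrality for $u_+$ is precisely where the evenness of $d$ enters, through $(-1)^d = 1$.
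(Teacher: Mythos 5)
Your proof is correct and matches the paper's argument: both determine the equivalent group diagrams via Lemma \ref{L:diagram} (using that $N(H)_0 = G$ since $H$ is trivial), compute the Weyl group representatives as the central elements $u_- = (-1,1)$ and $u_+ = ((-1)^d, -1) = (1,-1)$, use centrality to conclude the singular isotropy Lie algebras along the extended geodesic are only $\fk_-$ and $\fk_+$ so the span of vanishing Jacobi fields is two-dimensional, and apply Lemma \ref{L:obstruction} with $n-d-1 = 5-2-1 = 2$. The emphasis on centrality is slightly more explicit than the paper's presentation but captures exactly the same mechanism.
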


	
	
	\begin{proof}
		Following the discussion of Section \ref{SS:cohomogeneity one}, we consider an arbitrary unit speed horizontal geodesic $\gamma:(0,L)\to M_d^5$ and study the family $\Lambda$ 
		of Jacobi fields along $\gamma$ generated by the $(S^3 \times S^1)$-action. Since the principal isotropy group is zero-dimensional, no Jacobi field $J \in \Lambda$ 
		vanishes on the interior $(0, L)$. Therefore,
		\[
			\{J \in \Lambda: J(t)=0 \text{ for some }t\} = \{J\in\Lambda:J(nL)=0\text{ for some }n\in\mathbb{Z}\}.
		\]
		Moreover, the Jacobi fields $J$ satisfying $J(nL) = 0$ are the Killing fields associated with elements of the Lie algebra of the isotropy group $G_{\gamma(nL)}$.
		
	         Note that by Lemma \ref{L:diagram}, any other group diagram representing a manifold equivariantly diffeomorphic to $M_d^5$ is of the form (up to switching $K_-$ and $K_+$):
		\[
			\{(1,1)\}\subset \{(e^{x\theta}, 1)\}, \{(e^{yd\theta},e^{i\theta})\}\subset S^3\times S^1,\vspace{0.1cm}
		\]
		where $x, y\in\im(\mathbb{H})\cap S^3$. Furthermore, for all these group diagrams, the generators $w_-$ and $w_+$ of the Weyl group are given by $w_-=(-1,1)$ 
		and $w_+=(1,-1)$. In particular, the Weyl group has order four, and the horizontal geodesic $\gamma$ has length $4L$. 
		By Remark \ref{R:Weyl}, it follows that $G_{\gamma(0)}=G_{\gamma(2L)}=G_{\gamma(4L)}$ and $G_{\gamma(L)}=G_{\gamma(3L)}$.
		Since these groups are one-dimensional and their Jacobi fields on $M_d^5$ are linearly independent, the space $\Span\{J\in\Lambda\mid J(t)=0~{\text{for some}}~t\}$ is two-dimensional. 
		Thus by Lemma \ref{L:obstruction}, we conclude that $M_d^5$ cannot admit an invariant metric with $\pr{2}$.
	\end{proof}
	
	\begin{proposition}\label{P:QA5}
		The cohomogeneity one manifold $Q_A^5$ with the group diagram 
		\[
		\Z_n\subset\{(e^{ip\theta},e^{i\theta})\}, \{(e^{ip\theta},e^{i\theta})\}\subset S^3\times S^1
		\]
		does not admit any $(S^3\times S^1)$-invariant metric with positive $2$-intermediate Ricci curvature if $n=1$.
	\end{proposition}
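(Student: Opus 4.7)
The plan is to adapt the argument of Proposition \ref{P:M_d^5}, applying Lemma \ref{L:obstruction} to every equivalent group diagram of $Q_A^5$ (with $n=1$) by controlling the Weyl group and then the span of the vanishing Jacobi fields along the horizontal geodesic $\gamma$.

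First, I will identify all equivalent group diagrams. Since $H = \{1\}$, its normalizer $N(H)$ is all of $G = S^3 \times S^1$, which is connected, so Lemma \ref{L:diagram} reduces to independent conjugation of $K_-$ and $K_+$ by arbitrary elements of $G$. Conjugating $\{(e^{ip\theta}, e^{i\theta})\}$ by $(a_1, a_2) \in S^3 \times S^1$ fixes the abelian $S^1$-factor and sends $i$ to $a_1 i a_1^{-1}$; since $\mathrm{Ad}\colon S^3 \to \SO(\Im \HH)$ is surjective, every equivalent diagram has the form
\[
\{1\} \subset \{(e^{xp\theta}, e^{i\theta})\}, \{(e^{yp\theta}, e^{i\theta})\} \subset S^3 \times S^1,
\]
with arbitrary $x, y \in \Im(\HH) \cap S^3$.

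For each such diagram, both $K_\pm$ have minimal period $2\pi$ (forced by the second factor) and therefore a unique involution, equal to $((-1)^p, -1)$ at $\theta = \pi$. Thus $u_- = u_+$ as elements of $G$, their product is the identity, and the Weyl group collapses to $\mathbb{Z}/2$. By the formulas recorded after Lemma \ref{L:diagram}, this forces $G_{\gamma(kL)}$ to coincide with $K_-$ for even $k$ and with $K_+$ for odd $k$, so the only distinct singular isotropy subalgebras along $\gamma$ are $\mathfrak{k}_- = \RR(xp, i)$ and $\mathfrak{k}_+ = \RR(yp, i)$.

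Finally, a Jacobi field $J_X \in \Lambda$ vanishes at $\gamma(t)$ precisely when $X$ lies in the isotropy subalgebra at $\gamma(t)$, so $\Span\{J \in \Lambda : J(t) = 0 \text{ for some } t\}$ corresponds to $\mathfrak{k}_- + \mathfrak{k}_+$, which has dimension at most $2$. Applying Lemma \ref{L:obstruction} with $n = 5$ and $d = 2$ rules out $\pr{k}$ for all $k \leq 5 - 2 - 1 = 2$, proving the proposition. The main step requiring care is the classification of equivalent diagrams; the key observation afterwards is that the second $S^1$-factor forces both singular circles to share the same central involution, which collapses the Weyl group to $\mathbb{Z}/2$ and keeps $\mathfrak{k}_- + \mathfrak{k}_+$ at most two-dimensional regardless of the parameters $x$ and $y$.
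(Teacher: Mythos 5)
Your proof is correct and follows essentially the same approach as the paper: you classify the equivalent group diagrams (using that $H$ is trivial so $K_-$ and $K_+$ can be conjugated independently), observe that the second $S^1$-factor forces $u_-=u_+=((-1)^p,-1)$ so the Weyl group is $\Z/2$ and the geodesic has period $2L$, and conclude that the span of vanishing Jacobi fields has dimension at most $\dim(\mathfrak{k}_-+\mathfrak{k}_+)\le 2$, then invoke Lemma \ref{L:obstruction}. The only cosmetic difference is that the paper notes the span is exactly $2$-dimensional if $x\neq y$ and $1$-dimensional if $x=y$, while you simply use the upper bound, which suffices.
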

	
	\begin{proof}
		The proof follows using the same argument applied for Proposition \ref{P:M_d^5}. The only difference is that here all possible group diagrams are of the form 
		(up to switching $K_-$ and $K_+$):
		\[
			\{(1,1)\}\subset \{(e^{xp\theta},e^{i\theta})\}, \{(e^{yp\theta},e^{i\theta})\}\subset S^3\times S^1,\vspace{0.1cm}
		\]
		where $x, y\in\im(\mathbb{H})\cap S^3$. Moreover, the generators of the Weyl group are given by $w_-=w_+=(1,-1)$ or $(-1,-1)$, depending on whether $p$ is even or odd,
		and thus the geodesic $\gamma$ has length $2L$. Therefore, the space $\Span\{J\in\Lambda\mid J(t)=0~{\text{for some}}~t\}$ is two-dimensional if $x\neq y$ 
		and one-dimensional otherwise.	
	\end{proof}
	
	The manifolds in the family $Q_A^5$ are diffeomorphic to $S^3 \times S^2$ (see \cite[Table VII]{Hoelscher10}).
	In the next example, we show that $S^3 \times S^2$ with the action described in Example \ref{ex:s3xs3} is a member of $Q_A^5$, and the fact that it admits an invariant metric with $\pr{2}$ shows that Proposition \ref*{P:QA5} does not generalize to the case in which the principal isotropy group has order two.
	
	\begin{example}
		Identify $S^2$ with $S^3/S^1$, and consider the cohomogeneity one action of $S^3 \times S^1$ on $S^3 \times S^2$ given by\vspace{-0.1cm}
		\[
			(a,e^{i\theta})\cdot(p,q S^1) = (a p e^{-i \theta} , a q S^1).
		\]
		The metric from Example \ref{ex:s3xs3} is invariant under this action, and the curve $\gamma(t) = (e^{jt}, e^{-jt} S^1)$ is a horizontal geodesic. 
		Along $\gamma$, the singular orbits occur at $\gamma(\pi m/4)$ for any integer $m$.
		The isotropy groups at $\gamma(0)$ and $\gamma(\pi/4)$ are given by $\{(e^{i\theta},e^{i\theta})\}$ and $\{(e^{k\theta},e^{-i\theta})\}$, respectively.
		Moreover, the principal isotropy group is given by $H=\{\pm(1,1)\}$, and the Weyl group is generated by $w_-=(i,i)\cdot H$ and $w_+=(k,-i)\cdot H$. 
		It follows from Lemma \ref{L:diagram} that $S^3 \times S^2$ with this action is equivariantly diffeomorphic to a manifold in the family $Q_A^5$ described in Proposition \ref{P:QA5}, but with $n=2$ and $p=1$. 
		Note that the isotropy group at $\gamma(\pi/2)$ is $\{(e^{-i\theta},e^{i\theta}):\theta \in \R\}$.
		Therefore, unlike the case in Proposition \ref{P:QA5}, there are enough Jacobi fields that vanish along $\gamma$ for $\Ric_2 > 0$ to be permissible. 
		
	\end{example}
	
\subsection{Dimension 6}
	
	Consider the cohomogeneity one $(S^3\times S^3)$-action on $S^3\times S^3$ given by
	\[
		(a,b)\cdot(p,q) = (a * p,bq),
	\]
	where $*$ represents the suspension of the action of $S^3$ on $S^2$.
	The geodesic $\gamma(t) = (e^{it},1)$ is orthogonal to the orbits of this action, and the associated group diagram is
	\[
		S^1\times 1\subset S^3\times 1, S^3\times 1\subset S^3\times S^3.
	\]
	Furthermore, the standard product metric on $S^3 \times S^3$ has $\Ric_4 > 0$ and is invariant under the given action. In the following proposition, we prove that this cannot be 
	promoted to a $(S^3\times S^3)$-invariant metric with $\Ric_3>0$.
	
	\begin{proposition}
		The cohomogeneity one manifold $S^3\times S^3$ with the group diagram
		\[
		S^1\times 1\subset S^3\times 1, S^3\times 1\subset S^3\times S^3,
		\]
		where $S^1$ is the complex circle $\{e^{i\theta}\}$, does not admit any $(S^3\times S^3)$-invariant metric with positive $3$-intermediate Ricci curvature.
	\end{proposition}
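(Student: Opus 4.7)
The plan is to apply Lemma~\ref{L:obstruction} with $n=6$ and $d=2$, which would obstruct $\Ric_k>0$ for $k\leq 3$, in particular $\Ric_3>0$. For any $(S^3\times S^3)$-invariant metric $g$ on $M=S^3\times S^3$ and any horizontal geodesic $\gamma$, I need to verify that the span of Jacobi fields in $\Lambda$ that vanish at some point has dimension at most $2$. As in the proofs of Propositions~\ref{P:M_d^5} and \ref{P:QA5}, this forces us to inspect every group diagram equivalent to the given one.

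By Lemma~\ref{L:diagram}, since $K_-=K_+=S^3\times 1$ is normal in $G=S^3\times S^3$, the equivalent diagrams take the form
\[
\{e^{x\theta}\}\times 1 \subset S^3\times 1,\ S^3\times 1 \subset S^3\times S^3
\]
for some unit $x\in\im(\HH)$; call the principal isotropy $H_a$ and its Lie algebra $\fh_a=\RR x\times 0$. The Weyl involutions $w_\pm$ admit representatives $u_\pm=(y_\pm,1)\in N(H_a)\cap K_\pm$ with $y_\pm$ a unit vector in $\im(\HH)$ perpendicular to $x$. A short quaternionic computation (writing $y_-,y_+$ in an orthonormal basis of $x^\perp\cap\im(\HH)$) shows $y_+y_-\in\{e^{x\theta}\}$, so $u_+u_-\in H_a$. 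Hence $\gamma$ is closed of period $2L$ with exactly two singular orbits per period, both with isotropy Lie algebra $\fsu(2)\times 0$, while the principal isotropy equals $\fh_a$ on the complement.

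Now parametrize $\Lambda$ via the action-field map $X\mapsto J_X$, $X\in\fg$, where $J_X(t)=X^*_{\gamma(t)}$. This map is linear and surjective onto $\Lambda$ with kernel $\bigcap_t \fg_{\gamma(t)}=\fh_a$, which is $1$-dimensional. Since $J_X$ vanishes at $t_0$ iff $X\in\fg_{\gamma(t_0)}$, the set of all such $X$ is $(\fsu(2)\times 0)\cup\fh_a=\fsu(2)\times 0$, a $3$-dimensional subspace. Therefore
\[
\dim\Span\{J\in\Lambda \mid J(t)=0 \text{ for some } t\} = 3-1 = 2,
\]
and Lemma~\ref{L:obstruction} delivers the conclusion.

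The only step requiring genuine care is verifying $u_+u_-\in H_a$ for \emph{every} pair $(y_-,y_+)$ of unit vectors in $x^\perp\cap\im(\HH)$; I expect this to be the main (albeit routine) obstacle. It reduces to the observation that if $y,z\in x^\perp\cap\im(\HH)$ are orthonormal then $yz=\pm x$, so any product of two unit vectors in $x^\perp\cap\im(\HH)$ lies in $\RR\oplus\RR x=\Span\{e^{x\theta}\}$. Granted this, the count above is independent of the choice of equivalent diagram, and the proposition follows.
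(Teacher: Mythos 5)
Your proposal is correct and takes essentially the same approach as the paper: you apply Lemma~\ref{L:obstruction} with $d=2$ after computing, for every group diagram equivalent to the given one, that the Weyl group has order two (via $u_+u_-\in H$), that both singular isotropy groups equal $S^3\times 1$, and hence that the span of vanishing Jacobi fields in $\Lambda$ is $2$-dimensional. Your write-up is somewhat more explicit than the paper's (which simply notes $w_-=w_+=(j,1)\cdot H$ after reducing to the case where the circle is $\{e^{i\theta}\}$ and then asserts the span is generated by the fields corresponding to $j,k$ in the first factor), but both proofs rest on exactly the same computation.
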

	
	\begin{proof}
		In this case, the principal isotropy group is one-dimensional, and since the associated Jacobi fields along $\gamma$ are trivial, they do not contribute to the dimension 
		of $\Span\{J\in\Lambda\mid J(t)=0~{\text{for some}}~t\}$.
		Therefore, we only need to focus on Jacobi fields induced by the singular isotropy groups that do not vanish at regular points.
		
		Note that any other cohomogeneity one manifold which is equivariantly diffeomorphic to $S^3\times S^3$ has the same group diagram as above 
		(up to replacing the complex circle $\{e^{i\theta}\}$ with another circle subgroup of $S^3$). Moreover, for the given group diagram, we have $w_-=w_+=(j,1)\cdot H$,
		and hence the length of the geodesic $\gamma$ equals $2L$. Therefore, 
		$$\Span\{J\in\Lambda\mid J(t)=0~{\text{for some}}~t\}=\Span\{X_2, X_3\},$$
		where $X_2$ and $X_3$ denote the Killing fields on $S^3 \times S^3$ corresponding to $j$ and $k$ in the Lie algebra of the first $S^3$ factor. This completes the proof.
	\end{proof}
	
\subsection{Dimension 7}
	In this final section, we first prove the obstruction statement in Theorem \ref{main-thm:S3CP2}, then we complete the proofs of the remaining cases in Theorem \ref{main-thm:obstruction}.
	
	\begin{proposition}\label{P:P_A^7}
		The cohomogeneity one manifold $P_A^7$ with the group diagram
		\[
			H=\langle(i,i)\rangle\subset\{(e^{ip_-\theta},e^{iq_-\theta})\}, \{(e^{jp_+\theta},e^{jq_+\theta})\}\cdot H\subset S^3\times S^3,~~p_-, q_-\equiv 1\mod 4
		\]
		does not admit any $(S^3\times S^3)$-invariant metric with positive $3$-intermediate Ricci curvature.
	\end{proposition}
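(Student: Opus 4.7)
The strategy is to apply Lemma~\ref{L:obstruction} with $n = 7$ and $d = 3$: showing
\[
\dim\bigl(\Span\{J\in\Lambda : J(t) = 0~\text{for some}~t\}\bigr) \leq 3
\]
rules out $\Ric_k > 0$ for all $k \leq n - d - 1 = 3$. Since $H$ is finite, this span coincides with the sum $\sum_{m\in\Z}\fg_{\gamma(mL)}\subset\fg = \fsp(1)\oplus\fsp(1)$ of isotropy Lie algebras along $\gamma$. Writing $u \defeq u_+u_-$, one has $\fg_{\gamma(2mL)} = \mathrm{Ad}(u)^m\fk_-$ and $\fg_{\gamma((2m+1)L)} = \mathrm{Ad}(u)^m\fk_+$, so it suffices to exhibit an $\mathrm{Ad}(u)$-invariant $3$-dimensional subspace $V\subset\fg$ containing $\fk_- + \fk_+$. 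Equivalent group diagrams from Lemma~\ref{L:diagram} yield $G$-conjugates of $V$ (the $n$-conjugation is immaterial: $N(H)_0$ equals the maximal torus $T = \{(e^{i\theta},e^{i\phi})\}$, which contains and centralizes the abelian $K_-$), so it suffices to treat the stated diagram.

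The candidate is
\[
V \defeq \R(ip_-,iq_-) \oplus \R(jp_+,jq_+) \oplus \R(kp_+,kq_+),
\]
which visibly contains $\fk_-$ and $\fk_+$ and has dimension $3$. To verify $\mathrm{Ad}(u_\pm)$-invariance, one must first identify $u_\pm$. The unique involution $w_-$ in $K_-/H \cong S^1$ lifts to an element $u_-\in K_-$ with $u_-^2 = (i,i)$, and the hypothesis $p_-,q_-\equiv 1\pmod 4$ allows us to choose $u_- = \bigl((-1)^{m_1}e^{i\pi/4},(-1)^{m_2}e^{i\pi/4}\bigr)$ with $m_1 = (p_--1)/4$ and $m_2 = (q_--1)/4$. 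Because each $(-1)^{m_j}$ is central in $S^3$, the map $\mathrm{Ad}(u_-)$ acts as rotation by $\pi/2$ about the $i$-axis in each $\fsp(1)$-factor: it fixes $i$, sends $j\mapsto k$, and sends $k\mapsto -j$.

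For a representative $u_+ = (a,b)\in N(H)\cap K_+$ of $w_+$, the condition $u_+\in N(H)$ forces $aia^{-1} = bib^{-1} = \pm i$ with a matching sign, and $\mathrm{Ad}(u_+)\fk_+\subseteq\fk_+$ (automatic since $u_+\in K_+$ and $\fk_+$ is one-dimensional) forces $aja^{-1} = bjb^{-1} = \pm j$ with a matching sign. These constraints confine $a,b$ to $\{\pm 1,\pm i,\pm j,\pm k\}$ and force $\mathrm{Ad}(a) = \mathrm{Ad}(b)$; this common map is either the identity or a rotation by $\pi$ about one of $i,j,k$.

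Checking invariance of $V$ is now a short case analysis. Under $\mathrm{Ad}(u_-)$, the summand $\R(ip_-,iq_-)$ is fixed while $\R(jp_+,jq_+)$ and $\R(kp_+,kq_+)$ are interchanged up to sign. Under any of the four admissible forms for $\mathrm{Ad}(u_+)$, each coordinate line $\R i,\R j,\R k\subset\fsp(1)$ is preserved up to sign, so each of the three summands of $V$ is preserved. Hence $V$ is $\mathrm{Ad}(u)$-invariant, giving $\dim\bigl(\sum_m\fg_{\gamma(mL)}\bigr)\leq 3$, and Lemma~\ref{L:obstruction} yields the result. The step I expect to be most delicate is exhibiting, for each admissible pair $(p_+,q_+)$, an actual element $u_+\in N(H)\cap K_+\setminus H$ with $u_+^2\in H$; different parity and congruence conditions on $(p_+,q_+)$ select different explicit representatives from among the four $\mathrm{Ad}(u_+)$-types. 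Fortunately, $V$ is invariant in all four cases, so the obstruction is uniform.
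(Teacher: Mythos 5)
Your proof is correct and uses the same core tool as the paper (Lemma~\ref{L:obstruction} applied to the span of the singular isotropy algebras along $\gamma$), but you package the computation more cleverly. The paper determines the Weyl group order (four), lists the isotropy groups $G_{\gamma(mL)}$ for $m=0,\dots,4$ for a general representative diagram $\langle(u,x)\rangle\subset\{(e^{up_-\theta},e^{xq_-\theta})\},\{(e^{vp_+\theta},e^{yq_+\theta})\}\cdot H'$ with $u\perp v$, $x\perp y$, and verifies directly that their algebras span a $3$-dimensional space; this requires a three-way case split for $w_+$ depending on the parities of $p_+,q_+$. You instead exhibit a single $3$-dimensional subspace $V = \R(ip_-,iq_-)\oplus\R(jp_+,jq_+)\oplus\R(kp_+,kq_+)$ that contains $\fk_-+\fk_+$ and is invariant under $\mathrm{Ad}(u_\pm)$, which automatically traps every $\mathrm{Ad}(u_+u_-)^m\fk_\pm$ without computing the Weyl group or distinguishing cases for $w_+$. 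Your derivation that any $u_+\in N(H)\cap K_+$ has both coordinates in $\{\pm 1,\pm i,\pm j,\pm k\}$ with $\mathrm{Ad}(a)=\mathrm{Ad}(b)$ makes the case analysis you worry about at the end unnecessary: $u_+$ exists by general theory and $V$ is preserved no matter which of the four $\mathrm{Ad}$-types occurs. Your reduction to the stated diagram is also sound, and here you are in fact more careful than the paper: you correctly compute $N(H)_0 = \{(e^{i\theta},e^{i\phi})\}$ (the full maximal torus, since $T$ centralizes $(i,i)$), whereas the paper states $N(H)_0 = \{(e^{i\theta},e^{i\theta})\}$; this slip does not affect the paper's conclusion since in either case $n$ centralizes $K_-$ and the remaining conjugation is by a global $a\in G$, which just replaces $V$ by $\mathrm{Ad}(a)V$. (One small caveat: the constraint $aja^{-1}=bjb^{-1}=\pm j$ uses $p_+,q_+\neq 0$; if one of them vanished your derived constraint on the corresponding coordinate would fail, though one checks directly that $V$ remains invariant in that degenerate case as well.)
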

	
	\begin{proof} 
		Note that $N(H) = \{(e^{i\theta},e^{i\theta})\} \cdot \langle(j,j)\rangle$, and hence $N(H)_0 = \{(e^{i\theta},e^{i\theta})\}$. 
		Letting $K_- = \{(e^{ip_-\theta},e^{iq_-\theta})\}$ and $K_+ = \{(e^{jp_+\theta},e^{jq_+\theta})\}\cdot H$, for any $n \in N(H)_0$, we have $n K_- n^{-1} = K_-$ and $n K_+ n^{-1} = \{(e^{l p_+ \theta},e^{l q_+ \theta})\}\cdot H$ for some $l = (\cos \phi) j + (\sin \phi) k$.
		Hence any group diagram equivalent to $H \subset K_-, K_+ \subset G$ is of the form (up to switching $K_-$ and $K_+$):
		\[
			H'=\langle(u,x)\rangle  \subset  \{(e^{u p_- \theta},e^{x q_- \theta})\} ,  \{(e^{v p_+ \theta},e^{y q_+ \theta})\} \cdot H'  \subset S^3 \times S^3
		\]
		for some $u,v,x,y \in\Im(\mathbb{H})\cap S^3$ such that $u$ is orthogonal to $v$ and $x$ is orthogonal to $y$.
		Since $p_-, q_-\equiv 1\mod 4$, we have $w_- = (e^{u p_- \frac{\pi}{4}}, e^{x q_- \frac{\pi}{4}}) \cdot H'$.
		We may assume $\gcd(p_+,q_+)=1$, in which case
		\[
			w_+ = 
			\begin{cases}
				(-1,1) \cdot H' & \text{if } p_+ \not\equiv q_+ \mod 2, \\
				(v,y)\cdot H' & \text{if } p_+, q_+ \text{ are odd and } p_+ \equiv q_+ \mod 4, \\
				(-v,y)\cdot H' & \text{if } p_+, q_+ \text{ are odd and } p_+ \not\equiv q_+ \mod 4.
			\end{cases}
		\]
		In each case, the order of the Weyl group is four, and hence the length of the geodesic $\gamma$ equals $4L$.
		It follows that
		\[
			G_{\gamma(0)} = G_{\gamma(2L)} = G_{\gamma(4L)} = \{(e^{up_-\theta},e^{xq_-\theta})\}.
		\]
		Next, for representatives $u_- = (e^{u p_- \frac{\pi}{4}}, e^{x q_- \frac{\pi}{4}})$ and $u_+ \in\{(-1,1),(v,y),(-v,y)\}$ for $w_-$ and $w_+$ respectively, it follows from the assumption $p_-,q_- \equiv 1 \mod 4$ that 
		\[
			G_{\gamma(3L)} = (u_+u_-)G_{\gamma(L)}(u_+u_-)^{-1} = u_+ \of{\{(e^{uvp_+\theta},e^{xyq_+\theta})\} \cdot H'} (u_+)^{-1} = \{(e^{uvp_+\theta},e^{xyq_+\theta})\} \cdot H',
		\]
		Thus $G_{\gamma(L)} \neq  G_{\gamma(3L)}$, and it follows that the space spanned by the transverse Jacobi fields along $\gamma$ given by the $(S^3 \times S^3)$-action which vanish at some point is three-dimensional, thus proving the claim by Lemma \ref{L:obstruction}.
	\end{proof}

	\begin{proposition}
		The cohomogeneity one manifold $N_C^7$ with the group diagram 
		\[
			H\subset\{(e^{ip\theta},e^{iq\theta})\}, S^3\times\Z_n\subset S^3\times S^3,~{\text{where}}~(q,n)=1~{\text{and}}~\Z_n\cong H\subset\{(e^{ip\theta},e^{iq\theta})\}
		\]
		does not admit any $(S^3\times S^3)$-invariant metric with positive $2$-intermediate Ricci curvature if $n=1$ or $2$.
	\end{proposition}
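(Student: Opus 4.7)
The plan is to mirror the strategy used for Propositions \ref{P:M_d^5}, \ref{P:QA5}, and \ref{P:P_A^7}: enumerate all equivalent group diagrams via Lemma \ref{L:diagram}, identify every singular isotropy appearing along the horizontal geodesic $\gamma$, and then apply Lemma \ref{L:obstruction}. Since $\dim N_C^7 = 7$, ruling out $\pr{2}$ amounts to showing that
\[
\Span\{J \in \Lambda \mid J(t)=0 \text{ for some } t\}
\]
has dimension at most $4$.

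The first step is to note that for $n \in \{1,2\}$ the principal isotropy $H \cong \Z_n$ is contained in the center of $G = S^3 \times S^3$, so $N(H)_0 = G$. Consequently, Lemma \ref{L:diagram} implies that every equivalent group diagram has the form (up to swapping $K_-$ and $K_+$)
\[
H \subset \{(e^{up\theta}, e^{vq\theta})\},\ S^3 \times \Z_n \subset S^3 \times S^3
\]
for some $u,v \in \Im(\HH) \cap S^3$. Denote $K_- = \{(e^{up\theta}, e^{vq\theta})\}$, with Lie algebra $\fk_- = \Span\{(up,vq)\}$, and $K_+ = S^3 \times \Z_n$, with Lie algebra $\fk_+ = \fsu(2) \oplus 0$.

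The crux is to show every singular isotropy along $\gamma$ agrees, at the Lie algebra level, with either $\fk_-$ or $\fk_+$. A representative of $w_+$ may be chosen as $u_+ = (-1,1)$, which is central in $G$. A representative $u_-$ of $w_-$ lies in the abelian group $K_-$, so conjugation by $u_-$ fixes $K_-$ pointwise. Moreover, since $\Z_n$ is central in $S^3$ for $n \in \{1,2\}$ and $S^3$ is closed under conjugation in $S^3$, the subgroup $K_+ = S^3 \times \Z_n$ is normal in $G$. Combining these with the identity $G_{\gamma(2L + kL)} = (u_+ u_-) G_{\gamma(kL)} (u_+ u_-)^{-1}$ from Section \ref{SS:cohomogeneity one} yields, inductively, $G_{\gamma(2kL)} = K_-$ and $G_{\gamma((2k+1)L)} = K_+$ for every integer $k$.

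Finally, since $\dim \fh = 0$, the span of vanishing action fields is identified with $\fk_- + \fk_+ \subset \fg$. Because $(q,n)=1$ forces $q \neq 0$, the line $\fk_-$ has non-zero projection onto the second $\fsu(2)$ summand, so $\fk_- \cap \fk_+ = 0$ and the sum has dimension $1 + 3 = 4$. Lemma \ref{L:obstruction} with $d = 4$ then rules out $\pr{k}$ for $k \leq 7 - 4 - 1 = 2$. I expect the main obstacle to be the verification that conjugation by Weyl-group representatives preserves both $K_-$ and $K_+$, keeping the list of singular isotropies confined to $\{K_-, K_+\}$; this is precisely where the hypothesis $n \in \{1,2\}$ is essential, since it forces both centrality of $H$ and normality of $K_+ = S^3 \times \Z_n$ in $G$. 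For $n \geq 3$, the $\Z_n$ summand is no longer central, and conjugation by elements of $N(H)_0$ can introduce further circle isotropies whose Lie algebras enlarge the vanishing span beyond dimension $4$.
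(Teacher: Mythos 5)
Your proof is correct and follows the same overall strategy as the paper: enumerate the equivalent group diagrams via Lemma \ref{L:diagram}, show that every singular isotropy group along the horizontal geodesic $\gamma$ equals $K_-$ or $K_+$, conclude that $\Span\{J\in\Lambda \mid J(t)=0 \text{ for some } t\}$ has dimension $4 = \dim\fk_- + \dim\fk_+$, and apply Lemma \ref{L:obstruction} with $d=4$. Where the paper establishes the key step (that the isotropy groups at $\gamma(mL)$ do not vary with $m$ beyond $\{K_-, K_+\}$) by a case analysis on the parities of $p$, $q$, $p/\gcd(p,q)$, $q/\gcd(p,q)$ and the resulting Weyl-group generators, your structural argument — $u_+ = (-1,1)$ is central, $u_-$ lies in the abelian circle $K_-$, and $K_+ = S^3\times\Z_n$ is normal in $G$ for $n \leq 2$ — reaches the same conclusion uniformly and more transparently, and it also pinpoints exactly where $n\leq 2$ enters (centrality of $\Z_n$ giving $N(H)_0 = G$ and normality of $K_+$). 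One tiny caveat: $(q,n)=1$ does not literally force $q\neq 0$ when $n=1$ (since $\gcd(0,1)=1$); however, if $q=0$ then $\fk_- \subset \fk_+$, the span has dimension $3$, and Lemma \ref{L:obstruction} gives an even stronger obstruction, so nothing is lost.
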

	
	\begin{proof}
		Note that for $n\in\{1,2\}$, choosing a different $G$-invariant metric changes the group diagram to (up to switching $K_-$ and $K_+$):
		\begin{equation*}\label{eq:N_C^7}
			H\subset\{(e^{xp\theta},e^{yq\theta})\}, S^3\times\Z_n\subset S^3\times S^3\hspace{0.3cm}(\Z_n\cong H\subset\{(e^{xp\theta},e^{yq\theta})\}),		
		\end{equation*}
		where $x, y\in\im(\mathbb{H})\cap S^3$. For the case in which $n=1$, one has $w_+=(-1,1)$. Moreover, depending on the parity of $p$, $q$, $p/{\gcd(p,q)}$, 
		and $q/{\gcd(p,q)}$, $w_-$ is one of $(1,-1)$, $(-1,1)$, or $(-1,-1)$.
		In all of the cases, the geodesic $\gamma$ has length equal to either $2L$ or $4L$. Furthermore, there are two distinct singular isotropy groups along $\gamma$, 
		one of which has dimension one and the other one has dimension three. Thus the space $\Span\{J\in\Lambda\mid J(t)=0~{\text{for some}}~t\}$ is four-dimensional. 
		This completes the proof in the case where $n=1$.
		
		\vspace{0.1cm}
		
		Next, consider the case in which $n=2$. In this case, $q$ is odd. Moreover, if $p$ is odd (resp. even), then $H=\{\pm(1,1)\}$ (resp. $H=\langle(1,-1)\rangle$). 
		Furthermore, the generators of the Weyl group are given by $w_+=(-1,1)\cdot H$, and $w_-=(\pm x,y)\cdot H$ or $w_-=(\pm1,y)\cdot H$, depending on whether $p$ 
		is odd or even. Therefore, the geodesic $\gamma$ has length $4L$, $G_{\gamma(0)}=G_{\gamma(2L)}$, and $G_{\gamma(L)}=G_{\gamma(3L)}$, 
		from which the result follows.
	\end{proof}
	
	\begin{proposition}
		The cohomogeneity one manifold $Q_C^7$ with the group diagram 
		\[
			S^1\times 1\times 1\cdot\Z_n\subset\{(e^{i\phi},e^{ib\theta},e^{i\theta})\}, S^3\times 1\times 1\cdot\Z_n\subset S^3\times S^3\times S^1,
		\]
		where $\Z_n\subset\{(1,e^{ib\theta},e^{i\theta})\}$ and $H_0=\{(e^{i\phi},1,1)\}$, does not admit any $(S^3\times S^3\times S^1)$-invariant metric with positive $3$-intermediate  
		Ricci curvature if $n=1$.
	\end{proposition}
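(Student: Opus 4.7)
The plan is to apply Lemma \ref{L:obstruction} along the same lines as Proposition \ref{P:P_A^7}: I will enumerate all equivalent group diagrams via Lemma \ref{L:diagram}, identify the Weyl group generators $w_\pm$, track the singular isotropy groups $G_{\gamma(kL)}$ along a horizontal geodesic $\gamma$, and finally bound the dimension $d$ of the span of $G$-action Jacobi fields along $\gamma$ that vanish somewhere. Since the manifold has dimension $7$, any bound $d \leq 3$ will obstruct $\pr{3}$.

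Setting $n = 1$, I have $H = H_0 = S^1 \times 1 \times 1$ and $N(H)_0 = S^1 \times S^3 \times S^1$. Conjugation by an element of $N(H)_0$ fixes $K_+ = S^3 \times 1 \times 1$ and only alters the second-factor direction of $K_-$, so Lemma \ref{L:diagram} gives that every equivalent group diagram has the form (up to switching $K_\pm$)
\[
	S^1 \times 1 \times 1 \subset \{(e^{i\phi}, e^{xb\theta}, e^{i\theta})\}, S^3 \times 1 \times 1 \subset S^3 \times S^3 \times S^1
\]
for some $x \in \im(\HH) \cap S^3$. From $(N(H) \cap K_+)/H \cong N_{S^3}(S^1)/S^1 \cong \Z_2$ and $K_-/H \cong S^1$, I will take $w_+ = (j, 1, 1) \cdot H$ and $w_- = (1, e^{xb\pi}, -1) \cdot H$, so that $u_+ u_- = (j, e^{xb\pi}, -1)$. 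Using the quaternionic identity $j e^{i\phi} j^{-1} = e^{-i\phi}$, a direct computation then gives
\[
	(u_+ u_-) K_- (u_+ u_-)^{-1} = K_- \quad \text{and} \quad (u_+ u_-) K_+ (u_+ u_-)^{-1} = K_+,
\]
so the singular isotropy groups $G_{\gamma(kL)}$ take only two values as subgroups of $G$: $K_-$ for even $k$ and $K_+$ for odd $k$.

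Consequently,
\[
	\sum_{k \in \Z} \fg_{\gamma(kL)} = \fk_- + \fk_+ = \bigl(\fsp(1) \oplus 0 \oplus 0\bigr) + \Span_\R\{(0, xb, i)\}
\]
is $4$-dimensional, and modding out by the $1$-dimensional $\fh = \Span_\R\{(i, 0, 0)\} \subset \fk_+$ leaves a $3$-dimensional subspace of $\fg/\fh$. Hence $d = 3$, and Lemma \ref{L:obstruction} with $n = 7$ rules out $\pr{k}$ for all $k \leq n - d - 1 = 3$, which proves the claim. The main obstacle is verifying that conjugation by $u_+ u_-$ preserves both $K_-$ and $K_+$; this coincidence is what collapses the distinct singular isotropies along $\gamma$ to just two subgroups of $G$ and thereby keeps $d = 3$ small enough to reach the obstruction. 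Once this is in hand, the remaining dimension count is routine bookkeeping.
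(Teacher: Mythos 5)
Your proposal is correct and follows essentially the same route as the paper: enumerate the equivalent group diagrams via Lemma \ref{L:diagram}, identify the Weyl group generators $w_\pm$, observe that conjugation by $u_+u_-$ preserves both $K_-$ and $K_+$ so that the singular isotropies along $\gamma$ alternate between just two subgroups, and then count $\dim\bigl(\fk_- + \fk_+\bigr)/\fh = 3$ to invoke Lemma \ref{L:obstruction} with $n = 7$. The only cosmetic difference is that the paper parametrizes the equivalent diagrams with an additional conjugation parameter $\alpha \in S^3$ on the first factor (via the $a \in G$ freedom in Lemma \ref{L:diagram}), whereas you fix $\alpha = 1$; since conjugation by a fixed element of $G$ is an automorphism and leaves all the relevant dimension counts unchanged, this omission is harmless.
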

	
	\begin{proof}
		If $n=1$, then all possible group diagrams for the family $Q_C^7$ are of the form:\vspace{0.02cm}
		\[
			\{( e^{x\phi} , 1, 1)\}\subset\{( e^{x\phi} ,  e^{y b\theta}  , e^{i\theta})\}, S^3\times 1\times 1\subset S^3\times S^3\times S^1 \vspace{0.02cm}
		\]
		for some $x,y \in \Im(\HH) \cap S^3$. 
		For such a group diagram, one has $w_-=(1,\pm1, -1)\cdot H$ 
		and $w_+ =(z,1,1)\cdot H$ for some $z \in \Im(\HH) \cap S^3$ such that $z$ is orthogonal to $x$. Therefore, the geodesic $\gamma$ has length $4L$ and every singular isotropy group along $\gamma$ equals either $G_{\gamma(0)}$ 
		or $G_{\gamma(L)}$. Thus $\Span\{J\in\Lambda\mid J(t)=0~{\text{for some}}~t\}$ is three-dimensional and the claim follows.
	\end{proof}
	
	\begin{proposition}\label{prop:productactions4xs3}
		The seven-dimensional cohomogeneity one manifold given by the group diagram\vspace{-0.15cm}
		\[
			\{(1,1)\}\subset S^3\times 1, S^3\times 1\subset S^3\times S^3
		\]
		does not admit any $(S^3\times S^3)$-invariant metric with positive $3$-intermediate Ricci curvature.
	\end{proposition}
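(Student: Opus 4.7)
The plan is to apply Lemma~\ref{L:obstruction} following the same pattern as the preceding propositions in this section. With $n=7$, obstructing $\pr{3}$ via this lemma reduces to verifying that
\[
\dim\bigl(\Span\{J\in\Lambda\mid J(t)=0~\text{for some}~t\}\bigr)\leq 3.
\]

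First I would enumerate the equivalent group diagrams via Lemma~\ref{L:diagram}. Since $H=1$, we have $N(H)_0=G=S^3\times S^3$, and since $S^3\times 1$ is normal in $G$, for any $a\in G$ and $n\in N(H)_0$ the diagram $aHa^{-1}\subset anK_-n^{-1}a^{-1},\, aK_+a^{-1}\subset G$ collapses back to $1\subset S^3\times 1,\, S^3\times 1\subset S^3\times S^3$. Since $K_-=K_+$, the ``switching'' ambiguity is trivial as well, so the analysis reduces to a single group diagram.

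Next I would identify the Weyl group. Because $H=1$ and $N(H)\cap K_\pm=K_\pm=S^3\times 1$, the unique involution in $(N(H)\cap K_\pm)/H$ is $(-1,1)$, so $w_-=w_+=(-1,1)$. Their product $(-1,1)(-1,1)=(1,1)$ is trivial, so the Weyl group has order $2$ and $\gamma$ is closed of period $2L$. The only singular isotropy groups occurring along $\gamma$ are therefore $G_{\gamma(0)}=K_-$ and $G_{\gamma(L)}=K_+$, which coincide as $S^3\times 1$.

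Finally, the Jacobi fields in $\Lambda$ vanishing at $\gamma(t_0)$ correspond, under the action-field map (which is injective here because $\fh=0$), to the isotropy Lie algebra $\fg_{\gamma(t_0)}$. At both singular points this is $\fsp(1)\oplus 0\subset\fg$, so
\[
\Span\{J\in\Lambda\mid J(t)=0~\text{for some}~t\}=\fsp(1)\oplus 0,
\]
which is $3$-dimensional. Lemma~\ref{L:obstruction} with $d=3$ then obstructs $\pr{k}$ for all $k\leq n-d-1=3$. I do not anticipate significant obstacles; the only step requiring care is the rigidity of the group diagram under Lemma~\ref{L:diagram}, which is immediate from the normality of $S^3\times 1$ in $G$ combined with the triviality of the principal isotropy. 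Once this rigidity is in hand, the dimension count is automatic.
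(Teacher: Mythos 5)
Your proposal is correct and follows exactly the same argument as the paper: the group diagram is rigid because $H$ is trivial and $K_\pm = S^3\times 1$ is normal, the Weyl group generators are $w_-=w_+=(-1,1)$ so $\gamma$ has period $2L$, and the vanishing Jacobi fields span only $\fsp(1)\oplus 0$, giving $d=3$ in Lemma~\ref{L:obstruction}. The paper's proof is terser but uses the identical dimension count and uniqueness observation; your version just spells out the details of the Weyl group computation and the action-field correspondence.
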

	
	\begin{proof}
		Since $w_-=w_+=(-1,1)$, the geodesic $\gamma$ has length $2L$ and thus the space $\Span\{J\in\Lambda\mid J(t)=0~{\text{for some}}~t\}$ has dimension three.
		Since in this case the group diagram is unique, the claim follows.
	\end{proof}
	
	\begin{remark}
	Thinking of $S^4 \subset \HH \oplus \RR$, the manifold in Proposition \ref{prop:productactions4xs3} is $S^4 \times S^3$ with the $(S^3\times S^3)$-action given by 
	$(a,b) \cdot ((x,t),y) = ((ax,t),by)$.
	\end{remark}

\bibliographystyle{alpha}
\bibliography{bibfile}

\begin{thebibliography}{GVWZ06}

\bibitem[AB15]{AlexandrinoBettiol15}
M.~M. Alexandrino and R.~G. Bettiol.
\newblock {\em Lie Groups and Geometric Aspects of Isometric Actions}.
\newblock Springer International Publishing, 2015.

\bibitem[AQZ20]{AmannQuastZarei20preprint}
M.~Amann, P.~Quast, and M.~Zarei.
\newblock The flavour of intermediate {Ricci} and homotopy when studying
  submanifolds of symmetric spaces.
\newblock preprint, arXiv:2010.15742, 2020.

\bibitem[Bet14]{Bettiol14}
R.~G. Bettiol.
\newblock Positive biorthogonal curvature on ${S}^2\times {S}^2$.
\newblock {\em Proc. Amer. Math. Soc.}, 142(12):4341--4353, 2014.

\bibitem[Che73]{Cheeger73}
J.~Cheeger.
\newblock Some examples of manifolds of nonnegative curvature.
\newblock {\em J. Differential Geom.}, 8(4), 1973.

\bibitem[DDGR24]{DMDApreprint}
J.~Devito, M.~{Dom\'inguez-V\'azquez}, D.~{Gonz\'alez-\'Alvaro}, and
  A.~{Rodr\'iguez-V\'azquez}.
\newblock Positive {${Ric_2}$} curvature on products of spheres and their
  quotients via intermediate fatness.
\newblock preprint, arXiv:2410.18846, 2024.

\bibitem[DGM23]{DVGAM23}
M.~{Dom\'inguez-V\'azquez}, D.~{Gonz\'alez-\'Alvaro}, and L.~Mouill\'e.
\newblock Infinite families of manifolds of positive $k^{\rm th}$-intermediate
  {Ricci} curvature with $k$ small.
\newblock {\em Math. Ann.}, 386:1979--2014, 2023.

\bibitem[Gra67]{Gray67}
A.~Gray.
\newblock Pseudo-{R}iemannian almost product manifolds and submersions.
\newblock {\em J. Math. Mech.}, 16(7):715--737, 1967.

\bibitem[GVWZ06]{GVWZ06}
K.~Grove, L.~Verdiani, B.~Wilking, and W.~Ziller.
\newblock Non-negative curvature obstructions in cohomogeneity one and the
  {Kervaire} spheres.
\newblock {\em Ann. Sc. Norm. Super. Pisa Cl. Sci.}, 5(5):159--170, 2006.

\bibitem[GW09]{GromollWalschap09}
D.~Gromoll and G.~Walschap.
\newblock {\em Metric Foliations and Curvature}.
\newblock Birkhäuser Basel, 2009.

\bibitem[GW14]{GumaerWilhelm14}
D.~Gumaer and F.~Wilhelm.
\newblock On {Jacobi} field splitting theorems.
\newblock {\em Differential Geom. Appl.}, 37:109--119, 2014.

\bibitem[GW18]{GuijarroWilhelm18}
L.~Guijarro and F.~Wilhelm.
\newblock Focal radius, rigidity, and lower curvature bounds.
\newblock {\em Proc. Lond. Math. Soc.}, 116(6):1519--1552, 2018.

\bibitem[GW20]{GuijarroWilhelm20}
L.~Guijarro and F.~Wilhelm.
\newblock Restrictions on submanifolds via focal radius bounds.
\newblock {\em Math. Res. Lett.}, 27(1):115--139, 2020.

\bibitem[GW22]{GuijarroWilhelm22}
L.~Guijarro and F.~Wilhelm.
\newblock A softer connectivity principle.
\newblock {\em Communications in Analysis and Geometry}, 30(5):1093--1119,
  2022.

\bibitem[GWZ08]{GroveWilkingZiller08}
K.~Grove, B.~Wilking, and W.~Ziller.
\newblock Positively curved cohomogeneity one manifolds and {3-Sasakian}
  geometry.
\newblock {\em J. Differential Geom.}, 78(1), 2008.

\bibitem[GZ02]{GroveZiller02}
K.~Grove and W.~Ziller.
\newblock Cohomogeneity one manifolds with positive {Ricci} curvature.
\newblock {\em Invent. Math.}, 149(3):619--646, 2002.

\bibitem[HH89]{HsiangHsiang89}
W.~H. Hsiang and W.~Y. Hsiang.
\newblock On compact subgroups of the diffeomorphism groups of kervaire
  spheres.
\newblock {\em Annals of Math.}, 85(3):359--369, 1989.

\bibitem[Hoe07]{Hoelscher07thesis}
C.~Hoelscher.
\newblock {\em Classification of cohomogeneity one manifolds in low
  dimensions}.
\newblock PhD thesis, University of Pennsylvania, 2007.

\bibitem[Hoe10]{Hoelscher10}
C.~A. Hoelscher.
\newblock Classification of cohomogeneity one manifolds in low dimensions.
\newblock {\em Pacific J. Math.}, 246(1):129--185, 2010.

\bibitem[KM24]{KennardMouille24}
L.~Kennard and L.~Mouille.
\newblock Positive intermediate {Ricci} curvature with maximal symmetry rank.
\newblock {\em J. Geom. Anal.}, 34(129), 2024.

\bibitem[Mou]{Mouillewebsite}
L.~Mouill\'e.
\newblock Intermediate {Ricci} curvature.
\newblock
  \url{https://sites.google.com/site/lgmouille/research/intermediate-ricci-curvature}.

\bibitem[Mou22]{Mouille22b}
L.~Mouill\'e.
\newblock Torus actions on manifolds with positive intermediate {Ricci}
  curvature.
\newblock {\em J. Lond. Math. Soc.}, 106(4):3792--3821, 2022.

\bibitem[M{\"u}t87]{Muter87}
M.~M{\"u}ter.
\newblock {\em Kr"ummungserh"ohende Deformationen mittels Gruppenaktionen}.
\newblock PhD thesis, University of M\"unster, 1987.

\bibitem[O'N66]{ONeill66}
B.~O'Neill.
\newblock The fundamental equations of a submersion.
\newblock {\em Michigan Math. J.}, 13(4), 1966.

\bibitem[Par86]{Parker86}
J.~Parker.
\newblock $4$-dimensional {$G$}-manifolds with $3$-dimensional orbits.
\newblock {\em Pacific J. Math.}, 125(1):187--204, 1986.

\bibitem[RW22]{ReiserWraith22preprint2}
P.~Reiser and D.~Wraith.
\newblock Positive intermediate {Ricci} curvature on fibre bundles.
\newblock preprint, arXiv:2211.14610v1, 2022.

\bibitem[RW23a]{ReiserWraith23}
P.~Reiser and D.~Wraith.
\newblock Intermediate {Ricci} curvatures and {Gromov’s Betti} number bound.
\newblock {\em J. Geom. Anal.}, 33(364), 2023.

\bibitem[RW23b]{ReiserWraith23preprint2}
P.~Reiser and D.~J. Wraith.
\newblock A generalization of the {Perelman} gluing theorem and applications.
\newblock preprint, arXiv:2308.06996, 2023.

\bibitem[RW23c]{ReiserWraith23preprint1}
P.~Reiser and D.~J. Wraith.
\newblock Positive intermediate {Ricci} curvature on connected sums.
\newblock preprint, arXiv:2310.02746, 2023.

\bibitem[She93]{Shen93}
Z.~Shen.
\newblock On complete manifolds of nonnegative kth-{Ricci} curvature.
\newblock {\em Trans. Amer. Math. Soc.}, 338(1):289--310, 1993.

\bibitem[Ver02]{Verdiani02}
L.~Verdiani.
\newblock Cohomogeneity one {Riemannian} manifolds of even dimension with
  strictly positive curvature, {I}.
\newblock {\em Math. Z.}, 241(2):329--339, 2002.

\bibitem[Ver04]{Verdiani04}
L.~Verdiani.
\newblock Cohomogeneity one manifolds of even dimension with strictly positive
  sectional curvature.
\newblock {\em J. Differential Geom.}, 68(1), 2004.

\bibitem[VZ14]{VerdianiZiller14}
L.~Verdiani and W.~Ziller.
\newblock Concavity and rigidity in non-negative curvature.
\newblock {\em J. Differential Geom.}, 97(2):349--375, 2014.

\bibitem[Wil97]{Wilhelm97}
F.~Wilhelm.
\newblock On intermediate {Ricci} curvature and fundamental groups.
\newblock {\em Illinois J. Math.}, 41(3):488--494, 1997.

\bibitem[Wil02]{Wilking02}
B.~Wilking.
\newblock Manifolds with positive sectional curvature almost everywhere.
\newblock {\em Invent. Math.}, 148:117--141, 2002.

\bibitem[Wil07]{Wilking07}
B.~Wilking.
\newblock A duality theorem for {Riemannian} foliations in nonnegative
  sectional curvature.
\newblock {\em Geom. Funct. Anal.}, 17(4):1297--1320, 2007.

\bibitem[Wul24]{Wulle24}
D.~Wulle.
\newblock Cohomogeneity one manifolds with quasipositive curvature.
\newblock {\em Math. Ann.}, 390:303--350, 2024.

\end{thebibliography}

\end{document}